\newcommand{\C}{\mathbb C}
\newcommand{\N}{\mathbb N}
\newcommand{\Z}{\mathbb Z}
\newcommand{\eps}{\varepsilon}
\renewcommand{\-}{\setminus}
\newcommand{\norm}[1]{\left\Vert #1\right\Vert}
\newcommand{\abs}[1]{\left\vert #1 \right\vert}
\newcommand{\seq}[2]{(#1_{#2})_{#2\in\N}}
\newcommand{\proten}{\hat\otimes}
\newcommand{\ball}{\mathrm{ball}}
\newcommand{\hull}{\mathrm{co}}
\newcommand{\Proten}[3]{\hbox{$\widehat\bigotimes_{#1=1}^#2#3_#1$}}
\newcommand{\func}[5]
{\begin{eqnarray*} 
#1&:&#2\rightarrow#3,\\
&&#4\mapsto #5
\end{eqnarray*}}
\newcommand{\up}{\mathrm}
\newtheorem{thm}{Theorem}[section]
\newtheorem{cor}[thm]{Corollary}
\newtheorem{lem}[thm]{Lemma}
\newtheorem{prop}[thm]{Proposition}
\newtheorem{theorem}[thm]{Theorem}
\newtheorem{corollary}[thm]{Corollary}
\newtheorem{lemma}[thm]{Lemma}
\newtheorem{question}[thm]{Question}
\theoremstyle{definition}
\newtheorem{dfn}[thm]{Definition}
\theoremstyle{definition}
\newtheorem{definition}[thm]{Definition}
\theoremstyle{definition}
\newtheorem{Definition}[thm]{Definition}
\newtheorem{ex}[thm]{Example}
\newtheorem{example}[thm]{Example}
\title{Compact failure of multiplicativity for linear maps between Banach algebras.}
\author{M. J. Heath}
\begin{document}

\maketitle

The author is grateful for support from a PhD grant from the Engineering and Physical Sciences Research council of the UK and 
 post-doctoral fellowship number SFRH/BPD/40762/2007 from the Funda\c{c}\~{a}o para a Ci\'encia e a Tecnologia of Portugal. This paper contains some work included in the authors PhD thesis (\cite{MeThesis}).

\begin{abstract}
We introduce notions of compactness and weak compactness for multilinear maps from a product of normed spaces to a normed space, and prove some general results about these notions. We then consider linear maps $T:A\rightarrow B$ between Banach algebras that are ``close to multiplicative'' in the following senses: the failure of multiplicativity, defined by $S_T(a,b)=T(a)T(b)-T(ab)$, is  compact  [respectively weakly compact]. We call such maps cf-homomorphisms [respectively wcf-homomorphisms]. We also introduce a number of other, related definitions. We state and prove some general theorems about these maps when they are bounded, showing that they form categories and are closed under inversion of mappings and we give a variety of examples. We then turn our attention to commutative $C^*$-algebras and show that the behaviour of the various types of ``close-to-multiplicative'' maps depends on the existence of isolated points. Finally, we look at the splitting of Banach extensions when considered in the category of Banach algebras with bounded cf-homomorphisms [respectively wcf-homomorphisms] as the arrows. This relates to the (weak) compactness of 2-cocycles in the Hochschild-Kamowitz cohomology complex. We prove ``compact'' analogues of a number of established results in the Hochschild-Kamowitz cohomology theory.
\end{abstract}

\section{Introduction}
For Banach algebras, $A$ and $B$, and a linear map $T:A\rightarrow B$ we call the bilinear map 
$S_T:A\times A \rightarrow B$ given by $S_T(a,b)=T(a)T(b)-T(ab)$ the \emph{failure of multiplicativity of $T$}. Our focus in this 
paper will be new notions of  ``smallness'' for the failure of multiplicativity of a bounded linear map $T$, based on compactness. We will use these notions to produce new results in function theory and in the theory of Hochschild-Kamowitz cohomology of Banach algebras.    In order to provide some motivation we will first give a brief discussion of a well-studied notion of smallness for the failure of multiplicativity: namely that the norm of $S_T$ be less than some small $\delta>0$.  

 Let $A$ and $B$ be Banach algebras and let $\delta>0$. We call a bounded linear map $T:A\rightarrow B$  \emph{$\delta$-multiplicative} if $\norm{S_T}<\delta$. Now let $M(A,B)$ be the set of all multiplicative, bounded linear maps from $A$ to $B$ and, for $T\in \mathcal B(A,B)$, let $d(T)=\inf\{\norm{T-S}:S\in M(A,B)\}$.
In \cite{JohnsonAMNM}, Johnson proved the following.
\begin{prop}
Let $A$ and $B$ be Banach algebras and let $T:A\rightarrow B$ be a bounded linear map. Then
\[
\norm{S_T}\le(1+d(T)+2\norm T)d(T).
\]
\end{prop}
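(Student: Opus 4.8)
The plan is to use the definition of $d(T)$ as an infimum and reduce everything to a single algebraic identity for $S_T$ in terms of an approximating homomorphism. I would fix an arbitrary $S\in M(A,B)$ and write $T=S+R$ with $R=T-S$. Since the zero map lies in $M(A,B)$ we have $d(T)\le\norm T<\infty$, so the right-hand side is finite and the statement is non-trivial.

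First I would substitute $T=S+R$ into the definition of $S_T$ and expand. The point is that the purely multiplicative contribution cancels: because $S$ is multiplicative, $S(a)S(b)=S(ab)$, and these terms disappear, leaving
\[
S_T(a,b)=S(a)R(b)+R(a)S(b)+R(a)R(b)-R(ab).
\]
To obtain the stated bound with a coefficient $2\norm T$ rather than $2\norm S$, I would then re-express the two mixed terms using $S=T-R$, which after collecting the $R(a)R(b)$ terms gives
\[
S_T(a,b)=T(a)R(b)+R(a)T(b)-R(a)R(b)-R(ab).
\]

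Next I would estimate directly. For $a,b$ in the unit ball we have $\norm{ab}\le\norm a\,\norm b\le 1$, so each of $R(a)$, $R(b)$, $R(ab)$ has norm at most $\norm R$, while $\norm{T(a)},\norm{T(b)}\le\norm T$. The triangle inequality applied termwise then yields
\[
\norm{S_T}\le\bigl(1+\norm R+2\norm T\bigr)\norm R .
\]

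Finally I would take the infimum over $S\in M(A,B)$. The right-hand side is a continuous, increasing function of $\norm R=\norm{T-S}\ge 0$, and $\inf_S\norm{T-S}=d(T)$; hence, choosing $S$ with $\norm{T-S}$ arbitrarily close to $d(T)$, the bound converges to $\bigl(1+d(T)+2\norm T\bigr)d(T)$, giving the claim. I do not anticipate a genuine obstacle here: the argument is an algebraic identity followed by the triangle inequality. The only points requiring a little care are the monotonicity-plus-infimum step at the end (ensuring the estimate survives passage to the infimum, rather than being established only for each fixed $S$) and the bookkeeping choice of rewriting the mixed terms via $S=T-R$, which is precisely what produces the factor $2\norm T$ instead of $2\norm S$.
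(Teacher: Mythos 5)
Your proof is correct. Note, however, that the paper does not actually prove this proposition: it is stated as a quotation of Johnson's result, with the proof residing in \cite{JohnsonAMNM}, so there is no in-paper argument to compare yours against. Judged on its own merits, your argument is sound and is essentially the standard one. The identity
\[
S_T(a,b)=T(a)R(b)+R(a)T(b)-R(a)R(b)-R(ab),\qquad R=T-S,
\]
follows by direct expansion using the multiplicativity of $S$ (your intermediate formula $S_T(a,b)=S(a)R(b)+R(a)S(b)+R(a)R(b)-R(ab)$ and the subsequent substitution $S=T-R$ both check out), the termwise estimate gives $\norm{S_T}\le(1+\norm R+2\norm T)\norm R$ for every fixed $S\in M(A,B)$, and since $r\mapsto(1+r+2\norm T)r$ is continuous and increasing for $r\ge0$, passing to the infimum over $S$ is legitimate and yields the stated bound. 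You are also right that $M(A,B)\ne\emptyset$ (it contains the zero map), so $d(T)$ is a genuine finite infimum. The only difference from Johnson's own computation is bookkeeping: he keeps the mixed terms in the form $R(a)T(b)+S(a)R(b)-R(ab)$ and then estimates $\norm S\le\norm T+\norm R$, which produces exactly the same bound; your rewriting of the mixed terms via $S=T-R$ accomplishes the same thing.
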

This implies that for every $\delta>0$ there exists $\eps>0$, such that all linear maps $T:A\rightarrow B$ with $\norm T<1$ that are 
within distance $\eps$ of some multiplicative bounded linear map are $\delta$-multiplicative. 
Research on $\delta$-multiplicativity has focused on when the converse of this holds.  
 We call $(A,B)$  an \emph{AMNM pair} (an ``almost multiplicative bounded linear maps are near multiplicative bounded linear maps 
pair'') if for every $\eps>0$ there exists $\delta>0$ such that all $\delta$-multiplicative bounded 
linear maps $T:A\rightarrow B$ with $\norm T<1$ are within distance $\eps$ of some multiplicative bounded linear map. If $(A,\C)$ is 
an AMNM-pair we call $A$ \emph{AMNM}. AMNM algebras are studied in \cite{JohnsonAMNMf}, \cite{Sidney-AM}, \cite{JaroszAM}, 
\cite{Sanchez} and \cite{Howey}, and a good source for AMNM pairs is \cite{JohnsonAMNM}. 
\subsection{Notation}
Let $X$ be a topological space and $S\subset X$. We write $\overline S$ for the closure of $S$. In the case that $X$ is a normed space then the closure is taken in the norm topology unless specifically stated otherwise.

For a normed space $E$ we write $\ball(E)$ for the open unit ball of $E$ and $\overline\ball (E)$ for the closed unit ball of $E$ (i.e. $\overline\ball (E)=\overline{\ball (E)}$).

The notions of smallness on which we shall concentrate in this paper are based on concepts of compactness for multilinear maps which we shall define and discuss in the following section.

\section{Compactness of multilinear maps.}
We start with the following definition, which will be important in this paper. In the special case of the norm topology the same condition is considered by Krikorian in \cite{Krikorian}. We made the more general version below independently. 
\begin{Definition}
Let $E_1,\dots,E_n$ be normed spaces and $F$ a vector space with a topology $\mathcal T$ defined on it. An $n$-linear map $T:E_1\times\dots\times E_n\rightarrow F$ is 
\emph{compact with respect to} $\mathcal T$ if  the closure in $\mathcal T$ of
\[
T(\{(x_1,\dots, x_n): x_i\in\ball(E_i)\})
\] 
is compact when considered as topological space with the subspace topology induced by  $\mathcal T$. 
If $\mathcal T$ is the norm topology we call $T$ \emph{compact}. If $\mathcal T$ is the weak topology we call $T$ \emph{weakly compact}. Let $E_1,\dots, E_n$ be normed spaces. We denote the set of all compact 
$n$-linear maps from $E_1\times\dots\times E_n$ to $F$ by $\mathcal K^n(E_1,\dots, E_n;F)$; in the case where $E_1=\dots=E_n=E$  we 
denote $\mathcal K^n(E_1,\dots, E_n;F)$ by $\mathcal K^n(E,F)$. We denote the set of all weakly-compact 
$n$-linear maps from $E_1\times\dots\times E_n$ to $F$ by $w\mathcal K^n(E_1,\dots, E_n;F)$; in the case where $E_1=\dots=E_n=E$ we   
denote $w\mathcal K^n(E_1,\dots, E_n;F)$ by $w\mathcal K^n(E,F)$.
\end{Definition}

We mention the following interesting source of examples of compact multilinear maps, due to Krikorian (\cite{Krikorian}).
\begin{ex}
Let $E$ and $F$ be Banach spaces, $U$ be an open subset of $E$, $n\in\N$,  and $f:U\rightarrow F$ be an $n$-times-continuously-differentiable function that maps bounded sets to relatively compact sets. Then, for $x\in U$ and $k\in \{1,\dots,n\}$ the $k$th derivative of $f$ at $x$ is a compact $k$-linear map from $E\times\dots\times E$ to $F$. 
\end{ex}
We shall now prove that $T$ being (weakly) compact is equivalent to the associated linear map from the $n$-fold projective tensor 
product of $E$ with itself to $F$ being (weakly) compact. We refer the reader unfamiliar with this construction to 
\cite[Appendix A1]{Dales} for definitions and notation. We shall need the following result; with $n=2$ it is \cite[A.3.69]{Dales} and the general version is similar.
\begin{prop}\label{ptp}
Let $E_1,\dots E_n$ be normed spaces and $F$ be a Banach space, and let $T\in\mathcal B(E_1,\dots, E_n; F)$. Then there is a unique, 
bounded, linear map $\widetilde T:\Proten{i}{n}{E}\rightarrow F$ such that 
$$\widetilde T(x_1\proten\dots\proten x_n)=T(x_1,\dots, x_n)\quad(x_j\in E_j, j\in\{1,\dots,n\}).$$
Furthermore, the map $T\mapsto \widetilde T$, $\mathcal B^n(E_1,\dots,E_n;F)\rightarrow\mathcal B\left(\Proten{i}{n}{E},F\right)$, is an 
isometric Banach space isomorphism.
\end{prop}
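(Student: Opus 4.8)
The plan is to follow the standard two-step strategy used for the bilinear case \cite[A.3.69]{Dales}: first construct a linear map on the algebraic tensor product using its universal property, and then extend by continuity to the completion, exploiting the fact that $F$ is a Banach space. I write $\bigotimes_{i=1}^n E_i$ for the algebraic $n$-fold tensor product equipped with the projective norm, so that $\Proten{i}{n}{E}$ is its completion, and I let the elementary tensors $x_1\proten\dots\proten x_n$ denote the images of the $n$-linear embedding of $E_1\times\dots\times E_n$.

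First I would invoke the universal property of the algebraic tensor product: since $T$ is $n$-linear, there is a unique linear map $T_0$ on $\bigotimes_{i=1}^n E_i$ satisfying $T_0(x_1\proten\dots\proten x_n)=T(x_1,\dots,x_n)$. The key estimate is that $T_0$ is bounded for the projective norm with $\norm{T_0}\le\norm T$. Indeed, given any representation $u=\sum_k x_1^k\proten\dots\proten x_n^k$, multilinearity gives $T_0(u)=\sum_k T(x_1^k,\dots,x_n^k)$, whence $\norm{T_0(u)}\le\norm T\sum_k\norm{x_1^k}\cdots\norm{x_n^k}$; taking the infimum over all such representations bounds $\norm{T_0(u)}$ by $\norm T$ times the projective norm of $u$. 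Since the algebraic tensor product is dense in $\Proten{i}{n}{E}$ and $F$ is complete, $T_0$ extends uniquely to a bounded linear map $\widetilde T$ on the completion with $\norm{\widetilde T}=\norm{T_0}\le\norm T$; uniqueness of $\widetilde T$ follows because two bounded maps agreeing on the dense set of elementary tensors must coincide.

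It then remains to show that $T\mapsto\widetilde T$ is an isometric isomorphism of Banach spaces. Linearity of the assignment is immediate from uniqueness. For surjectivity, given $S\in\mathcal B(\Proten{i}{n}{E},F)$ I would set $T(x_1,\dots,x_n)=S(x_1\proten\dots\proten x_n)$; this is $n$-linear since the tensor embedding is, and the single-term representation gives the estimate $\norm{x_1\proten\dots\proten x_n}\le\norm{x_1}\cdots\norm{x_n}$ for the projective norm, so $T$ is bounded and $\widetilde T=S$ by density. For the isometry, the inequality $\norm{\widetilde T}\le\norm T$ is already in hand, while the reverse follows from the same single-term estimate, since $\norm{T(x_1,\dots,x_n)}=\norm{\widetilde T(x_1\proten\dots\proten x_n)}\le\norm{\widetilde T}\norm{x_1}\cdots\norm{x_n}$ yields $\norm T\le\norm{\widetilde T}$. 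Injectivity is then automatic from the isometry.

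I do not expect any genuine obstacle here, as the argument is a routine adaptation of the $n=2$ case to several variables; the only points requiring care are the bookkeeping of the multilinear (rather than bilinear) representations in the norm estimate, and the verification that the projective norm satisfies $\norm{x_1\proten\dots\proten x_n}\le\norm{x_1}\cdots\norm{x_n}$, which is precisely what makes both halves of the isometry go through.
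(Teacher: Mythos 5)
Your proof is correct and is exactly the standard argument that the paper itself defers to: the paper gives no proof of this proposition, instead citing the bilinear case \cite[A.3.69]{Dales} and noting that ``the general version is similar,'' and what you have written out (universal property of the algebraic tensor product, the norm estimate $\norm{T_0(u)}\le\norm{T}\,\norm{u}_\pi$ via infimum over representations, extension by continuity to the completion, and the two cross-norm estimates giving the isometry and surjectivity) is precisely that similar general version. No gaps; your bookkeeping of the multilinear representations and the single-term estimate for elementary tensors are the only points of care, and you handle both.
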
 The following is elementary.
\begin{lemma}\label{convex}
Let $E_1,\dots,E_n$ be normed spaces. Then $\overline\ball\left(\widehat\bigotimes_{i=1}^nE_i\right)$ is the closed convex hull of $\{x_1\hat\otimes\dots\hat\otimes x_n: x_i\in\ball(E_i)\}$.
\end{lemma}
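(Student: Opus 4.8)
The plan is to prove the two inclusions separately. Write $D=\{x_1\proten\dots\proten x_n:x_i\in\ball(E_i)\}$, let $C=\overline{\hull(D)}$ be its closed convex hull, and abbreviate $P=\Proten{i}{n}{E}$, writing $\norm\cdot$ for the projective norm on $P$ and recalling that $\norm{x_1\proten\dots\proten x_n}\le\norm{x_1}\cdots\norm{x_n}$. The easy inclusion $C\subseteq\overline\ball(P)$ is immediate: if $x_i\in\ball(E_i)$ then $\norm{x_1\proten\dots\proten x_n}\le\norm{x_1}\cdots\norm{x_n}<1$, so $D\subseteq\ball(P)\subseteq\overline\ball(P)$; as $\overline\ball(P)$ is closed and convex it contains the closed convex hull $C$ of $D$.

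For the reverse inclusion I would first reduce the problem from $P$ to the algebraic tensor product $E_1\otimes\dots\otimes E_n$, which is dense in $P$. The general fact I would use is that in any normed space the closed unit ball is the closure of the open unit ball of any dense subspace: given $w$ with $\norm w\le1$, approximate it by algebraic tensors $y_j\to w$ and then replace $y_j$ by $t_jy_j$ for suitable $t_j<1$ with $t_j\to1$, so that $\norm{t_jy_j}<1$ while $t_jy_j\to w$. Hence $\overline\ball(P)$ is the closure of $U:=\{u\in E_1\otimes\dots\otimes E_n:\norm u<1\}$, and since $C$ is closed it is enough to show $U\subseteq C$.

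To prove $U\subseteq C$, take $u$ with $\norm u<1$. By the definition of the projective norm there is a finite representation $u=\sum_{k=1}^m x_1^k\otimes\dots\otimes x_n^k$ with $\sum_k\prod_i\norm{x_i^k}<1$. Discarding the terms in which some factor vanishes and normalising the rest, I would write each surviving term as $\lambda_k z_k$, where $\lambda_k=\prod_i\norm{x_i^k}>0$ and $z_k=y_1^k\otimes\dots\otimes y_n^k$ with each $\norm{y_i^k}=1$; note $\lambda:=\sum_k\lambda_k<1$. Each $z_k$ lies in $C$, because for $0\le t<1$ we have $tz_k=(t^{1/n}y_1^k)\otimes\dots\otimes(t^{1/n}y_n^k)\in D$ and $tz_k\to z_k$ as $t\to1$, so $z_k\in\overline D\subseteq C$. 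Since also $0\in D$ (take every factor to be $0$), the identity $u=\sum_k\lambda_kz_k+(1-\lambda)\cdot0$ presents $u$ as a convex combination of points of $C$, so $u\in C$ by convexity.

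The point requiring care is the mismatch between open and closed balls: the generating set $D$ uses vectors from the open balls $\ball(E_i)$, so the normalised tensors $z_k$, whose factors have norm exactly $1$, do not themselves belong to $D$ and must be obtained as norm-limits of open-ball tensors. This is precisely why the statement takes the \emph{closed} convex hull, and it is the only genuinely non-formal step; the remaining ingredients --- the estimate for the projective norm of an elementary tensor, the density of the algebraic tensor product in $P$, and the closedness of $C$ --- are routine.
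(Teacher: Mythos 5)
Your proof is correct. Note that the paper offers no argument at all for this lemma --- it is stated with the single remark ``The following is elementary'' --- so there is nothing to compare against; your write-up simply supplies the standard details that the author left implicit. All three of your ingredients are sound: the easy inclusion via convexity and closedness of $\overline\ball\bigl(\widehat\bigotimes_{i=1}^nE_i\bigr)$; the reduction to elements of the algebraic tensor product of projective norm strictly less than $1$ (the rescaling trick $y_j\mapsto t_jy_j$ handles the open/closed mismatch and the passage to the dense subspace simultaneously); and the decomposition $u=\sum_k\lambda_kz_k+(1-\lambda)\cdot 0$ coming straight from the definition of the projective norm, with the $n$th-root scaling $t z_k=(t^{1/n}y_1^k)\otimes\dots\otimes(t^{1/n}y_n^k)$ showing each normalised elementary tensor lies in the closure of the generating set. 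This is exactly the argument one would expect the author to have in mind, and your closing remark correctly identifies why the statement must be phrased with the \emph{closed} convex hull.
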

%
The following is a special case of \cite[Theorem IV.5]{Bourbaki} (Krein's Theorem).
\begin{prop}\label{Bour}
Let $F$ be a Banach space and $\mathcal T$ a topology on $F$ for which a functional on $F$ is $\mathcal T$-continuous if and only if
it is norm continuous (equivalently $\mathcal T$ contains the weak topology and is contained in the norm topology). Let $S$ be a subset of of $F$ which is compact with respect to $\mathcal T$. Then the closed convex balanced hull (and 
therefore the closed convex hull) of $S$ is compact with respect to  $\mathcal T$.
\end{prop}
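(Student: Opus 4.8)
The plan is to reduce to the classical weak-topology form of Krein's theorem and then to promote weak compactness of the hull to $\mathcal T$-compactness. First I would unwind the hypothesis: it says precisely that $\sigma(F,F^*)\subseteq\mathcal T$ and that $\mathcal T$ is contained in the norm topology. Since a finer topology has smaller closures, and since the norm closure and weak closure of a convex set coincide (Mazur's theorem), the $\mathcal T$-closure of a convex balanced set agrees with both; hence the closed convex balanced hull $K$ of $S$ is the same set whether the closure is taken weakly, in $\mathcal T$, or in norm. Because $\mathcal T$ contains the weak topology, the identity map $(F,\mathcal T)\to(F,\sigma(F,F^*))$ is continuous, so the $\mathcal T$-compact set $S$ is weakly compact; restricting this continuous bijection to the compact space $(S,\mathcal T)$ also shows that $\mathcal T$ and the weak topology already coincide on $S$.

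By the classical Krein theorem the weakly compact set $S$ has weakly compact closed convex balanced hull $K$. It remains to upgrade this to $\mathcal T$-compactness, and this is the heart of the matter. Two sanity checks frame the difficulty: when $\mathcal T$ is the norm topology the assertion is the elementary fact that the closed convex hull of a norm-compact set in a Banach space is norm-compact, while when $\mathcal T=\sigma(F,F^*)$ it is classical Krein, and a general $\mathcal T$ interpolates between these. In particular the upgrade cannot follow from weak compactness of $S$ alone (the closed convex hull of a weakly compact set need not be norm-compact); the full strength of the hypothesis that $S$ is $\mathcal T$-compact must be used.

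The natural mechanism for the upgrade is the barycentric representation of $K$. Let $M$ be the set of signed (or complex) Borel measures of total variation at most $1$ on the compact Hausdorff space $(S,\mathcal T)$; by the Banach--Alaoglu theorem $M$ is weak$^*$-compact. The barycentre map $\beta(\mu)=\int_S x\,\d\mu(x)$ takes values in $F$ because $S$ is weakly compact, and one checks that $\beta(M)=K$ and that $\beta$ is continuous from $(M,\text{weak}^*)$ to $(F,\sigma(F,F^*))$, since each $\phi\in F^*$ restricts to an element of $C(S)$. The main obstacle is then isolated: to conclude that $K$ is $\mathcal T$-compact it suffices to promote the continuity of $\beta$ from the weak topology to $\mathcal T$, which (given that $K$ is already weakly compact) is equivalent to showing that $\mathcal T$ and the weak topology agree on $K$. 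This is exactly the content secured by the general form of Krein's theorem in \cite[Theorem IV.5]{Bourbaki}, stated for an arbitrary topology of the dual pair $\langle F,F^*\rangle$; rather than reprove it I would verify that our $\mathcal T$ meets its hypotheses and quote it. Finally, the closed convex hull $\overline{\hull}(S)$ is a weakly, hence $\mathcal T$-, closed subset of the compact set $K$, so it too is $\mathcal T$-compact.
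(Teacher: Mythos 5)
Your proposal is correct and, despite the scaffolding, is essentially the paper's proof: the paper gives no argument beyond noting that the proposition is a special case of the general Krein theorem \cite[Theorem IV.5]{Bourbaki}, and that same citation is exactly what you fall back on at the one step that matters (upgrading weak compactness of the hull to $\mathcal{T}$-compactness). Since the quoted general theorem already yields the conclusion directly, your preliminary reductions (Mazur, classical weak Krein, the barycentre map) are redundant; the only point genuinely requiring verification is Bourbaki's completeness hypothesis on the hull, which in a Banach space holds because the hull is norm-closed and bounded and the norm topology is the Mackey topology.
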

In particular if $\mathcal T$ is the norm topology or the weak topology the above holds.

In the special case where $n=2$ and $\mathcal T$ is the norm topology, the following theorem and corollary were proven in \cite{Krikorian}. We proved this more general version 
independently.
\begin{theorem}\label{compml}
Let $F$ and $\mathcal T$ be as in Proposition \ref{Bour} and let $E_1,\dots,E_n$ be normed spaces. A multilinear map 
$T\in \mathcal B(E_1, \dots, E_n;F)$ is compact with respect to $\mathcal T$ if 
and only if the linear map $\widetilde T \in \mathcal B\left(\widehat\bigotimes_{i=1}^nE_i, F\right)$ with 
$\widetilde T(x_1\hat\otimes\dots\hat\otimes x_n)=T(x_1,\dots, x_n)$ is compact with respect to $\mathcal T$.
\end{theorem}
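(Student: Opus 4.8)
The plan is to prove the two implications separately, writing $B=\{(x_1,\dots,x_n):x_i\in\ball(E_i)\}$ for the product of the open unit balls and $D=\{x_1\hat\otimes\dots\hat\otimes x_n:x_i\in\ball(E_i)\}$ for the corresponding set of elementary tensors, and using throughout the defining relation $\widetilde T(D)=T(B)$. Two standing observations will be used repeatedly: since $\mathcal T$ is contained in the norm topology, every $\mathcal T$-closed set is norm-closed; and since $\mathcal T$ contains the weak topology it is Hausdorff, so that a $\mathcal T$-closed subset of a $\mathcal T$-compact set is again $\mathcal T$-compact. To avoid ambiguity I would write $\overline{S}^{\mathcal T}$ for the closure of $S$ in $\mathcal T$ and reserve $\overline{S}$ for the norm closure.

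For the direction ``$\widetilde T$ compact $\Rightarrow$ $T$ compact'', I would first note that the projective tensor norm gives $\norm{x_1\hat\otimes\dots\hat\otimes x_n}=\norm{x_1}\cdots\norm{x_n}<1$ when each $x_i\in\ball(E_i)$, so that $D\subseteq\ball\bigl(\widehat\bigotimes_{i=1}^nE_i\bigr)$ and hence $T(B)=\widetilde T(D)\subseteq\widetilde T\bigl(\ball(\widehat\bigotimes_{i=1}^nE_i)\bigr)$. Passing to $\mathcal T$-closures exhibits $\overline{T(B)}^{\mathcal T}$ as a $\mathcal T$-closed subset of the $\mathcal T$-compact set $\overline{\widetilde T(\ball(\widehat\bigotimes_{i=1}^nE_i))}^{\mathcal T}$, which is therefore $\mathcal T$-compact; so $T$ is compact with respect to $\mathcal T$.

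The converse is the substance of the theorem. Assuming $T$ is compact with respect to $\mathcal T$, so that $\overline{T(B)}^{\mathcal T}$ is $\mathcal T$-compact, I would invoke Proposition \ref{Bour} (Krein's theorem) to conclude that the $\mathcal T$-closed convex hull $C$ of $\overline{T(B)}^{\mathcal T}$ is $\mathcal T$-compact. The key claim is then $\widetilde T\bigl(\ball(\widehat\bigotimes_{i=1}^nE_i)\bigr)\subseteq C$. To see this, Lemma \ref{convex} gives $\ball(\widehat\bigotimes_{i=1}^nE_i)\subseteq\overline{\hull(D)}$; applying the norm-continuous linear map $\widetilde T$ and using linearity together with $\widetilde T(D)=T(B)$ yields $\widetilde T(\ball(\widehat\bigotimes_{i=1}^nE_i))\subseteq\overline{\hull(T(B))}$. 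Since $T(B)\subseteq C$ and $C$ is convex we have $\hull(T(B))\subseteq C$, and since $C$ is $\mathcal T$-closed it is norm-closed, so $\overline{\hull(T(B))}\subseteq C$, proving the claim. It then follows that $\overline{\widetilde T(\ball(\widehat\bigotimes_{i=1}^nE_i))}^{\mathcal T}$ is a $\mathcal T$-closed subset of the $\mathcal T$-compact set $C$, hence $\mathcal T$-compact, so $\widetilde T$ is compact with respect to $\mathcal T$.

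The only genuinely nontrivial step — and the sole place where the full hypothesis on $\mathcal T$ is used — is the appeal to Krein's theorem to upgrade compactness of $\overline{T(B)}^{\mathcal T}$ to compactness of its closed convex hull; this is precisely what lets the convexification built into the projective tensor product (Lemma \ref{convex}) be absorbed without losing compactness. The remainder is routine bookkeeping, distinguishing the norm and $\mathcal T$ closures and repeatedly using that the former is finer than the latter.
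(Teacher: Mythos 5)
Your proof is correct and takes essentially the same route as the paper's: the easy direction is a closed-subset-of-a-compact-set argument, and the substantive direction combines Krein's theorem (Proposition \ref{Bour}) with Lemma \ref{convex} to absorb the convex hull without losing $\mathcal T$-compactness. The only differences are cosmetic — you form the closed convex hull $C$ in the codomain and show $\widetilde T\bigl(\ball\bigl(\widehat\bigotimes_{i=1}^nE_i\bigr)\bigr)\subseteq C$, whereas the paper pushes the hull through $\widetilde T$ and compares images — together with your (welcome) more explicit bookkeeping of norm versus $\mathcal T$ closures.
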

\begin{proof}
In this proof ``$\mathcal T$-compact'' will mean ``compact with respect to $\mathcal T$''.  First assume that $T$ is 
$\mathcal T$-compact. Then 
$$\overline{T(\{(x_1,\dots, x_n): x_i\in\ball(E_i)\})}=
\overline{\widetilde T(\{x_1\hat\otimes\dots\hat\otimes x_n: x_i\in\ball(E_i)\})}$$ 
is $\mathcal T$-compact. We call this set $S$. By Proposition \ref{Bour}, $\overline\hull(S)$ is also $\mathcal T$-compact.
\begin{eqnarray*}
K:&=&\overline{\widetilde T(\overline\hull(\{x_1\hat\otimes\dots\hat\otimes x_n: x_i\in\ball(E_i)\}}))\\
&=&\overline{\widetilde T(\hull(\{x_1\hat\otimes\dots\hat\otimes x_n: x_i\in\ball(E_i)\}}))\\
&=&\overline{\hull(\widetilde T(\{x_1\hat\otimes\dots\hat\otimes x_n: x_i\in\ball(E_i)\}))}\subseteq\overline\hull(S),
\end{eqnarray*}
and so the $K$ is $\mathcal T$-compact. By Lemma \ref{convex}, we have 
$K=\overline{\widetilde T\left(\overline\ball(\widehat\bigotimes_{i=1}^nE_i)\right)}$ and 
hence $\widetilde T$ is $\mathcal T$-compact.
\paragraph{}We now assume that $\widetilde T$ is compact. We have,
\begin{eqnarray*}
 \overline{T\big(\{(x_1,\dots,x_n):x_i\in\ball(E_i)\}\big)}
&=&\overline{\widetilde T\big(\{(x_1\proten\dots\proten x_n):x_i\in\ball(E_i)\}\big)}\\
&\subseteq& \overline{\widetilde T\big(\{z\in\Proten{i}{n}{E}:\norm z<1\}\big)}.
\end{eqnarray*}
The right-hand side of the above expression is compact and hence so is the left hand side; the result follows.
\end{proof}
In the following corollary and its proof ``closed'' means ``closed with respect to the norm topology''.
\begin{cor}\label{comvec}
Let $E_1,\dots, E_n$ and $F$ be Banach spaces; then $\mathcal K^n(E_1,\dots, E_n;F)$ and $w\mathcal K^n(E_1,\dots, E_n;F)$ are 
closed subspaces of $\mathcal B^n(E_1,\dots, E_n;F)$ such that $\mathcal K^n(E_1,\dots, E_n;F)$ is isometrically isomorphic to 
$\mathcal K\left(\Proten{i}{n}{E},F\right)$ and  \newline $w\mathcal K^n(E_1,\dots, E_n;F)$ is isometrically isomorphic to 
$w\mathcal K\left(\Proten{i}{n}{E},F\right)$.
\end{cor}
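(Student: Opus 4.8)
The plan is to push everything through the isometric isomorphism supplied by Proposition \ref{ptp}. Write $\pi=\widehat\bigotimes_{i=1}^nE_i$ for brevity and let $\Phi:\mathcal B^n(E_1,\dots,E_n;F)\to\mathcal B(\pi,F)$, $T\mapsto\widetilde T$, be the map of that proposition, which is an isometric Banach space isomorphism. Before applying it I would recall the classical fact that, for Banach spaces $\pi$ and $F$, both $\mathcal K(\pi,F)$ and $w\mathcal K(\pi,F)$ are closed linear subspaces of $\mathcal B(\pi,F)$. The compact case is standard; for the weakly compact case the cleanest route is the Gantmacher-type characterization that $S\in\mathcal B(\pi,F)$ is weakly compact if and only if $S^{**}(\pi^{**})\subseteq F$, from which both linearity and norm-closedness follow immediately, since a norm limit $S$ of weakly compact $S_m$ satisfies $S^{**}(\pi^{**})\subseteq\overline F=F$.

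Next I would apply Theorem \ref{compml} twice, once with $\mathcal T$ the norm topology and once with $\mathcal T$ the weak topology, both of which are permitted by Proposition \ref{Bour}. This yields the two identifications $\Phi^{-1}(\mathcal K(\pi,F))=\mathcal K^n(E_1,\dots,E_n;F)$ and $\Phi^{-1}(w\mathcal K(\pi,F))=w\mathcal K^n(E_1,\dots,E_n;F)$; that is, a bounded $n$-linear map is (weakly) compact precisely when its linearization $\widetilde T$ is a (weakly) compact operator. I would also note in passing that membership in $\mathcal K^n$ or $w\mathcal K^n$ already forces boundedness, since any set with norm-compact, respectively weakly compact, closure is norm bounded; hence there is no difficulty in regarding these as subsets of $\mathcal B^n(E_1,\dots,E_n;F)$ and in invoking Theorem \ref{compml}.

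Finally, since $\Phi$ is an isometric isomorphism it is in particular a linear homeomorphism, so the preimage of a closed subspace is again a closed subspace, and the restriction of $\Phi$ to such a preimage is an isometric isomorphism onto that subspace. Combining this with the two identifications of the previous paragraph gives at once that $\mathcal K^n(E_1,\dots,E_n;F)$ and $w\mathcal K^n(E_1,\dots,E_n;F)$ are closed subspaces of $\mathcal B^n(E_1,\dots,E_n;F)$, isometrically isomorphic via $\Phi$ to $\mathcal K(\pi,F)$ and $w\mathcal K(\pi,F)$ respectively, which is exactly the assertion.

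I expect no serious obstacle, as the whole analytic content was already discharged in Theorem \ref{compml}; what remains is bookkeeping together with the standard closedness of these operator-ideal subspaces. The only point that calls for a moment's care is the closedness of $w\mathcal K(\pi,F)$, for which I would rely on the second-adjoint characterization above rather than attempt a direct argument with weak compactness of images.
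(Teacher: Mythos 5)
Your proof is correct and takes essentially the same route as the paper: both arguments rest on Proposition \ref{ptp}, on Theorem \ref{compml} applied with $\mathcal T$ the norm and the weak topology, and on the standard fact that $\mathcal K(\pi,F)$ and $w\mathcal K(\pi,F)$ are closed subspaces of $\mathcal B(\pi,F)$. The only cosmetic difference is the final bookkeeping step --- the paper transfers completeness through the isometric isomorphism to deduce closedness, while you pull back closedness directly through the homeomorphism $\Phi$ --- and the extra details you supply (the Gantmacher-type characterization of weak compactness, and the observation that (weakly) compact multilinear maps are automatically bounded) are correct points that the paper leaves implicit.
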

\begin{proof}
Let $E$ and $F$ be Banach spaces. Then it is standard that $\mathcal K(E,F)$ and $w\mathcal K(E,F)$ are closed subspaces of 
$\mathcal B(E,F)$. Thus $\mathcal K\left(\Proten{i}{n}{E},F\right)$ and $w\mathcal K\left(\Proten{i}{n}{E},F\right)$ are Banach 
spaces. By Proposition \ref{ptp} and Theorem \ref{compml},  $\mathcal K^n(E_1,\dots, E_n;F)$ is isometrically isomorphic to 
$\mathcal K\left(\Proten{i}{n}{E},F\right)$ and \newline $w\mathcal K^n(E_1,\dots, E_n;F)$ is isometrically isomorphic to 
$w\mathcal K\left(\Proten{i}{n}{E},F\right)$. Hence, they are complete, and therefore closed, subspaces of 
$\mathcal B^n(E_1,\dots, E_n;F)$ and the result follows.
\end{proof}
\subsection{(Weakly) compact failure of multiplicativity for linear maps.}
In this subsection we introduce a class of functions between Banach algebras for which are, in a certain sense, ``close'' to being 
homomorphisms. 
\begin{dfn} \label{nhdfn}
Let $A$ and $B$ be normed algebras and let $T$ be a linear map. Recall the definition of the failure of multiplicativity $S_T$.
We call a $T$, a \emph{cf-homomorphism} (where  ``cf'' stands for ``compact from'') if $S_T$ is compact and a \emph{wcf-homomorphism} (where  ``wcf'' stands for ``weakly compact from'') if $S_T$ is weakly compact. If $S_T$ is finite-dimensional we call $T$ an \emph{fdf-homomorphism} and if for $n\in \N$ is at most $n$-dimensional we call $T$ an \emph{$n$df-homomorphism}.

If, for each, $a\in A$ we have that $S_T(f,\cdot)$ and $S_T(\cdot, f)$ are compact linear maps, we say that $T$ is a \emph{semi-cf-homomorphism}. We define ``\emph{semi-wcf-homomorphism}'', ``\emph{semi-fdf-homomorphism}'' and ``\emph{semi-$n$df-homomorphism}'' similarly.
\end{dfn}

We note some obvious relationships between these conditions. The set of cf-homomorphisms from $A$ to $B$ contains all homomorphisms from $A$ to  $B$ and all compact linear maps 
from $A$ to $B$.  The set of wcf-homomorphisms from $A$ to $B$ contains all weakly compact linear maps from $A$ to $B$. Adding ``w'' or ``semi-'' to a condition makes it weaker. An $n$df-homomorphism is an $(n+1)$df-homomorphism and a continuous fdf-homomorphism is a cf-homomorphism. 

Let $E$ and $F$ be Banach spaces. We say a map $T:E\rightarrow F$ is \emph{weak-weak} continuous, if it is continuous when considered as a map from $E$ equipped with the weak topology to $F$ equipped with the weak topology. The following is part of \cite[27.6]{Jameson}.
\begin{prop}\label{w-w}
Let $E$ and $F$ be Banach spaces and $T:E\rightarrow F$ a bounded linear map. Then $T$ is weak-weak continuous.
\end{prop}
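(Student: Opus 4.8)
The plan is to exploit the fact that the weak topology is an initial topology and to invoke its universal property. Recall that the weak topology on $F$ is, by definition, the coarsest topology making every bounded linear functional $g$ in the dual space $F^*$ continuous; equivalently, it is the initial topology on $F$ induced by the family $F^*$. The standard fact about initial topologies that I would use is that a map $T$ from a topological space $E$ into $F$ is continuous for this topology precisely when $g\circ T$ is continuous for every $g\in F^*$.

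Accordingly, I would first reduce the claim to showing that, for each $g\in F^*$, the scalar-valued map $g\circ T:E\rightarrow\C$ is continuous when $E$ carries its weak topology. Since both $T$ and $g$ are bounded and linear, the composition $g\circ T$ is a bounded linear functional on $E$, that is, an element of $E^*$. The final step is to observe that every element of $E^*$ is, by the very definition of the weak topology on $E$, continuous with respect to that topology. Hence $g\circ T$ is weakly continuous for each $g\in F^*$, and the universal property of the initial topology on $F$ then yields that $T$ is weak-weak continuous.

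There is no real obstacle here: the argument is a direct and routine application of the universal property of initial topologies, and the only points requiring any care are to state that property in the correct direction and to confirm that $g\circ T$ genuinely lands in $E^*$, which is immediate from the boundedness of $T$ and of $g$. Since the result is entirely standard, I would in practice simply cite \cite[27.6]{Jameson} as the statement already does, but the sketch above records the short self-contained argument.
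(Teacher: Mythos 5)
Your proof is correct: the reduction via the universal property of the initial topology, together with the observation that $g\circ T\in E^*$ for every $g\in F^*$, is exactly the standard argument and contains no gaps. The paper itself supplies no proof at all --- it simply cites \cite[27.6]{Jameson} --- so your sketch is, if anything, more complete than the source; it matches the canonical textbook proof that the citation points to.
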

We shall need the following lemma.
\begin{lem}\label{comcom}
Let $E$, $F$ and $G$ be Banach spaces, $T_1\in\mathcal B(G,E),T_2\in\mathcal B(F,G)$ and $S\in\mathcal B^2(E,F)$. We define a map 
$R\in\mathcal B^2(G,F)$ by $R(g_1,g_2)=S(T_1(g_1),T_2(g_1))$. Suppose that $S\in \mathcal K^2(E,F)$. Then $R\in\mathcal K^2(G,F)$ and 
$T_2\circ S\in\mathcal K^2(E,G)$. Suppose instead that $S\in w\mathcal K^2(E,F)$. Then $R\in w\mathcal K^2(G,F)$ and 
$T_2\circ S\in w\mathcal K ^2(E,G)$.
\end{lem}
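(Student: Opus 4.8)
I would read the lemma as the assertion that \emph{(weak) compactness of a bilinear map is preserved both under post-composition by a bounded linear map and under pre-composition (in each slot) by a bounded linear map}; taking the displayed formula in its only type-consistent form, the pre-composed map is $R(g_1,g_2)=S(T_1g_1,T_1g_2)$, so that $R\in\mathcal B^2(G,F)$ and $T_2\circ S\in\mathcal B^2(E,G)$ as required. My plan is to treat the two assertions separately and, within each, to handle the compact and weakly compact cases in parallel by writing $\mathcal T$ for either the norm topology or the weak topology. The one fact I need uniformly is that every bounded linear map is $\mathcal T$-$\mathcal T$ continuous: for the norm topology this is just boundedness, and for the weak topology it is Proposition \ref{w-w}. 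I also use that both topologies are Hausdorff, so that $\mathcal T$-compact sets are $\mathcal T$-closed.

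For the post-composed map, set $U=\{(e,e'):e,e'\in\ball(E)\}$, so that by hypothesis $K:=\overline{S(U)}^{\,\mathcal T}$ is $\mathcal T$-compact. Since $T_2$ is $\mathcal T$-$\mathcal T$ continuous, $T_2(K)$ is $\mathcal T$-compact and hence $\mathcal T$-closed. As $(T_2\circ S)(U)=T_2(S(U))\subseteq T_2(K)$, the set $\overline{(T_2\circ S)(U)}^{\,\mathcal T}$ is contained in $T_2(K)$, so it is a $\mathcal T$-closed subset of a $\mathcal T$-compact set and is therefore $\mathcal T$-compact. Thus $T_2\circ S$ is $\mathcal T$-compact.

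For the pre-composed map, put $r=\norm{T_1}$ (the case $T_1=0$ being trivial), so that $T_1(\ball(G))\subseteq r\,\overline{\ball(E)}$. By bilinearity, $R(\ball(G)\times\ball(G))\subseteq S(r\,\overline{\ball(E)}\times r\,\overline{\ball(E)})=r^2\,S(\overline{\ball(E)}\times\overline{\ball(E)})$. Since $\ball(E)$ is norm-dense in $\overline{\ball(E)}$ and $S$ is norm-continuous, $S(\overline{\ball(E)}\times\overline{\ball(E)})$ lies in the norm-closure, and a fortiori the $\mathcal T$-closure, of $S(U)$, hence inside $K$. Therefore $\overline{R(\ball(G)\times\ball(G))}^{\,\mathcal T}\subseteq r^2K$, and since multiplication by $r^2$ is a $\mathcal T$-homeomorphism of $F$, the set $r^2K$ is $\mathcal T$-compact; as before, a $\mathcal T$-closed subset of it is $\mathcal T$-compact, so $R$ is $\mathcal T$-compact.

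I expect the only real point of care to be the weakly compact case: one must not appeal to ordinary continuity of $T_2$ but instead to weak-weak continuity (Proposition \ref{w-w}), and one must note that the weak topology is Hausdorff so that the ``closed subset of a compact set'' step and the identification of $\overline{(\cdot)}^{\,\mathcal T}$ remain valid; the remaining bookkeeping is just keeping track of norm versus weak closures and of open versus closed balls. As an alternative that avoids even this, the whole lemma follows from Theorem \ref{compml} by linearising: one has $\widetilde{T_2\circ S}=T_2\circ\widetilde S$ and $\widetilde R=\widetilde S\circ(T_1\proten T_1)$, where $T_1\proten T_1:G\proten G\to E\proten E$ is the bounded operator induced on the projective tensor products, whereupon the statement reduces to the standard fact that the compact operators and the weakly compact operators each form a two-sided ideal in $\mathcal B$.
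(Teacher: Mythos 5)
Your main argument is correct and follows essentially the same route as the paper: bound the image of the pre-composed map inside a scalar multiple of the compact set $\overline{S(\ball(E)\times\ball(E))}$, push that compact set through $T_2$ using weak-weak continuity (Proposition \ref{w-w}) in the weakly compact case, and finish with ``a closed subset of a compact set is compact''; you also correctly read the definition as $R(g_1,g_2)=S(T_1(g_1),T_1(g_2))$, which is the form the paper actually uses in its proof. The differences in your main proof are cosmetic: you run both topologies in parallel via $\mathcal T$ and you work with closed balls plus a density/joint-continuity step, where the paper keeps open balls via $T_1(\ball(G))\subseteq\norm{T_1}\ball(E)$; your version is, if anything, slightly more careful about the Hausdorff point that the paper leaves implicit. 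Your closing alternative---linearising via Theorem \ref{compml} so that $\widetilde{T_2\circ S}=T_2\circ\widetilde S$ and $\widetilde R=\widetilde S\circ(T_1\proten T_1)$, then invoking the two-sided ideal property of the (weakly) compact operators---is a genuinely different and equally valid route that the paper does not take; it buys brevity and reuses machinery the paper has already established (Theorem \ref{compml}, Corollary \ref{comvec}), at the cost of the additional standard fact that $T_1\otimes T_1$ extends to a bounded operator between the projective tensor products.
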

\begin{proof}
In this proof we shall write ``$X^{(2)}$'' for the Cartesian product of a set $X$ with itself. Closures will initially be in the norm topology. Suppose $S\in \mathcal K^2(E,F)$.
\begin{eqnarray}
\nonumber\overline {R\left(\ball(G)^{(2)}\right)}&=&\overline{S(\{(T_1(g_1),T_1(g_2)):g_1,g_2\in\ball(G)\})}\\
\nonumber&\subseteq& \overline{S(\{(e_1,e_2):e_1,e_2\in\norm T_1\ball(E)\})}\\
&=&\norm {T_1}^2\overline{S\left(\ball(E)^{(2)}\right)},
\label{nheq3}
\end{eqnarray}
but $\overline{S\left(\ball(E)^{(2)}\right)}$ is compact and so $\overline {R(\ball(G)^{(2)})}$ is a closed subset of the 
compact set $\norm {T_1}^2\overline{S\left(\ball(E)^{(2)}\right)}$ and thus is compact. Hence $R\in\mathcal K^2(G,F)$.
Also, 
\begin{equation}\label{nheq1}
 \overline {T_2\circ S(\ball(G)^{(2)})}=\overline{T_2(S(\ball(E)^{(2)}))}=\overline{T_2\left(\overline{S(\ball(E)^{(2)})}\right)},
\end{equation}
but $\overline{S(\ball(E)^{(2)})}$ is compact, and so (since $T_2$ is bounded)
\begin{equation}\label{nheq2}
 \overline {S(\ball(E)^{(2)})}=\overline{T_2\left(\overline{S(\ball(E)^{(2)})}\right)}
=T_2\left(\overline{S(\ball(E)^{(2)})}\right),
\end{equation}
is compact. Hence, $T_2\circ S\in\mathcal K^2(E,G)$.

Now suppose $S\in w\mathcal K^2(E,F)$. By Proposition \ref{w-w}, $T_1$ and $T_2$ are continuous when $E$, $F$ and $G$ are 
considered with the weak topology. Hence, each of (\ref{nheq1}), (\ref{nheq2}) and (\ref{nheq3}) holds with the closure taken in the weak 
topology and so the result follows as in the norm topology case.
\end{proof}
\begin{thm}\label{nhcomp}
Let $A,B$ and $C$ be Banach algebras and let $T_1:A\rightarrow B$ and $T_2:B\rightarrow C$ be bounded cf-homomorphisms 
[respectively bounded wcf-homomorphisms]. Then $T_2\circ T_1$ is a bounded cf-homomorphism [respectively bounded  wcf-homomorphism].
\end{thm}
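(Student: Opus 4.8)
The plan is to reduce the statement to the already-established Lemma~\ref{comcom} and Corollary~\ref{comvec} by first writing the failure of multiplicativity of the composite explicitly in terms of $S_{T_1}$ and $S_{T_2}$. First I would compute $S_{T_2\circ T_1}$ directly from its definition. Expanding $(T_2\circ T_1)(a)(T_2\circ T_1)(b)-(T_2\circ T_1)(ab)$ and using the defining relation $T_i(x)T_i(y)=S_{T_i}(x,y)+T_i(xy)$ twice, I expect the identity
\[
S_{T_2\circ T_1}(a,b)=S_{T_2}\big(T_1(a),T_1(b)\big)+T_2\big(S_{T_1}(a,b)\big)\qquad(a,b\in A).
\]
Indeed, $T_2(T_1(a))T_2(T_1(b))=S_{T_2}(T_1(a),T_1(b))+T_2(T_1(a)T_1(b))$, and substituting $T_1(a)T_1(b)=S_{T_1}(a,b)+T_1(ab)$ and applying linearity of $T_2$ makes the two copies of $T_2(T_1(ab))$ cancel. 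This identity is the crux of the argument.

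With the identity in hand, each summand is exactly of the form handled by Lemma~\ref{comcom}. For the first summand take $S=S_{T_2}\in\mathcal B^2(B,C)$; since $T_2$ is a (w)cf-homomorphism, $S$ is compact [resp. weakly compact], and precomposing both slots with the bounded map $T_1:A\to B$ yields, by the conclusion of Lemma~\ref{comcom} about $R$ (applied with the lemma's $E=F=B$ and $G=A$), that $(a,b)\mapsto S_{T_2}(T_1(a),T_1(b))$ lies in $\mathcal K^2(A,C)$ [resp. $w\mathcal K^2(A,C)$]. For the second summand take $S=S_{T_1}\in\mathcal B^2(A,B)$; since $T_1$ is a (w)cf-homomorphism, $S$ is compact [resp. weakly compact], and postcomposing with the bounded map $T_2:B\to C$ yields, by the conclusion of Lemma~\ref{comcom} about $T_2\circ S$, that $(a,b)\mapsto T_2(S_{T_1}(a,b))$ lies in $\mathcal K^2(A,C)$ [resp. $w\mathcal K^2(A,C)$].

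Finally I would invoke Corollary~\ref{comvec}, which asserts that $\mathcal K^2(A,C)$ and $w\mathcal K^2(A,C)$ are subspaces of $\mathcal B^2(A,C)$ and in particular are closed under addition. Since $S_{T_2\circ T_1}$ is the sum of the two (weakly) compact bilinear maps produced above, it is itself compact [resp. weakly compact]; and $T_2\circ T_1$ is bounded because $T_1$ and $T_2$ are. Hence $T_2\circ T_1$ is a bounded cf-homomorphism [resp. wcf-homomorphism].

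The only real content, and the one place to be careful, is deriving the decomposition so that the \emph{same} map $T_1$ occupies both arguments of $S_{T_2}$, matching the precise shape $R(g_1,g_2)=S(T_1(g_1),T_1(g_2))$ of Lemma~\ref{comcom}; once the cancellation is verified to leave exactly these two terms, the remainder is a formal application of the lemma together with the vector-space structure of the spaces of (weakly) compact bilinear maps. I do not anticipate any further obstacle.
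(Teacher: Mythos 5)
Your proposal is correct and takes essentially the same approach as the paper: the explicit decomposition of $S_{T_2\circ T_1}$ into the term $S_{T_2}(T_1(\cdot),T_1(\cdot))$ and the term $T_2\circ S_{T_1}$, followed by Lemma~\ref{comcom} for each summand and the linear-subspace structure from Corollary~\ref{comvec} to handle the sum. (Incidentally, your sign on the $S_{T_2}$ term is the correct one---the paper's displayed identity has a harmless sign slip---but since negation preserves (weak) compactness this makes no difference to the argument.)
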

\begin{proof}
Let $a,b\in A$. Then, a direct calculation gives,
\[
S_{T_2\circ T_1}(a,b)=T_2( S_{T_1}(a,b))-S_{T_2}(T_1(a),T_1(b)).
\]
Thence, the result is immediate from Lemma \ref{comcom} and Corollary \ref{comvec}. 
\end{proof}
Hence we have that the class of Banach algebras together with bounded cf-homomorphisms and the class of Banach algebras together with bounded wcf-homomorphism form concrete categories. 
\begin{thm}\label{coninv}
Let $A$ and $B$ be Banach algebras and $T$ be a bounded cf-homomorphism [respectively a bounded wcf-homomorphism] that is bijective. Then the 
inverse mapping $T^{-1}:B\rightarrow A$ is a bounded cf-homomorphism [respectively a bounded wcf-homomorphism].
\end{thm}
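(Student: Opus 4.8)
The plan is to reduce the whole statement to the composition Lemma \ref{comcom}, which already packages exactly the two operations we shall need. First I would dispose of boundedness of the inverse: since $T$ is a bounded linear bijection between the Banach spaces $A$ and $B$, the open mapping theorem (equivalently, the bounded inverse theorem) guarantees that $T^{-1}:B\rightarrow A$ is bounded. Thus the only substantive task is to show that $S_{T^{-1}}$ is compact [respectively weakly compact].

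The heart of the argument is an algebraic identity relating $S_{T^{-1}}$ to $S_T$. Writing $a_i=T^{-1}(b_i)$, so that $b_i=T(a_i)$ and $b_1b_2=T(a_1)T(a_2)$, and substituting $T(a_1)T(a_2)=S_T(a_1,a_2)+T(a_1a_2)$ into $T^{-1}(b_1b_2)$, a direct computation gives
\[
S_{T^{-1}}(b_1,b_2)=-T^{-1}\big(S_T(T^{-1}(b_1),T^{-1}(b_2))\big)\qquad(b_1,b_2\in B).
\]
In other words, $S_{T^{-1}}=-T^{-1}\circ R$, where $R:B\times B\rightarrow B$ is the bounded bilinear map given by $R(b_1,b_2)=S_T(T^{-1}(b_1),T^{-1}(b_2))$.

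With this identity in hand, the conclusion follows directly from Lemma \ref{comcom}. Applying the pre-composition half of that lemma, with $S=S_T$ (compact [respectively weakly compact] by hypothesis) and $T_1=T^{-1}$, shows that $R$ is compact [respectively weakly compact]. Applying the post-composition half of the lemma, now with $R$ playing the role of $S$ and $T^{-1}$ playing the role of $T_2$, shows that $T^{-1}\circ R$ is compact [respectively weakly compact]. Finally, since $\mathcal K^2(B,A)$ [respectively $w\mathcal K^2(B,A)$] is a vector space by Corollary \ref{comvec}, the scalar multiple $S_{T^{-1}}=-T^{-1}\circ R$ is again compact [respectively weakly compact], so $T^{-1}$ is a bounded cf-homomorphism [respectively bounded wcf-homomorphism], as required.

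I expect the only delicate point to be the derivation of the displayed identity: once it is established correctly, no estimates remain and both cases are handled uniformly by the two halves of Lemma \ref{comcom}. The main thing to watch is the bookkeeping when chaining the pre- and post-composition steps, since the intermediate map $R$ lives on $B\times B$ whereas $S_T$ lives on $A\times A$, and the roles of the three underlying Banach spaces in Lemma \ref{comcom} must be assigned consistently at each application.
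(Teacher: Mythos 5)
Your proof is correct and is essentially identical to the paper's: boundedness of $T^{-1}$ via the Banach isomorphism theorem, the identity $S_{T^{-1}}(b_1,b_2)=-T^{-1}\big(S_T(T^{-1}(b_1),T^{-1}(b_2))\big)$, and then Lemma \ref{comcom}. The only difference is that the paper leaves the two applications of the lemma (pre-composition, then post-composition) implicit, whereas you spell out the bookkeeping explicitly.
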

\begin{proof}
 By the Banach isomorphism theorem,  $T^{-1}$ is a bounded linear map. Also, for $b,b'\in B$ a direct calculation yields,
\[
S_{T^{-1}}(b,b')=-T^{-1}\circ S_T(T^{-1}(b),T^{-1}(b')).
\]
Thus, the result follows from Lemma \ref{comcom}.
\end{proof}
Hence, a morphism in the category of Banach algebras with bounded cf-homomorphisms or in the category of Banach algebras with 
bounded wcf-\\homomorphisms is an isomorphism if and only if it is bijective.
\section{Some examples}

We give an example to show that bounded semi-cf-isomorphisms need not be wcf-homomorphisms.
\begin{ex}
 Following \cite{LRRW} we say a Banach algebra, $A$ has \emph{compact multiplication} if, for each $a\in A$, left and right 
multiplication by $a$ -- $L_a$ and $R_a$ -- are both compact operators on $A$. Let $A$ and $B$ be Banach algebras with compact multiplication and 
let $T:A\rightarrow B$ be a bounded linear map. Then it is clear that $T$ is automatically a semi-cf-homomorphism. Now let $A=c_0$, with pointwise multiplication. Then $A$ has compact multiplication since if we take $(a_n)\subset c_{00}$ such that $a_n\rightarrow a$ then $L_{a_n}\rightarrow L_a=R_a$ and each $L_{a_n}$ has finite rank. Let  $T:A\rightarrow A$ be given by $T(a)=2a$; then $S_T$ is surjective. Since the unit ball of $c_0$ is not relatively compact in the weak topology, it follows that $T$ is not a wcf-homomorphism.
\end{ex}

We now show that  weakly compact linear maps need not be semi-cf-\\homomorphisms.
\begin{ex}
Let $A$ be an infinite-dimensional, unital Banach algebra, which is reflexive as a Banach space. For example let $A_0$ be $\ell^2$ with pointwise multiplication and let $A$ be the one-dimensional unitisation of $A_0$. Denote the unit of $A$ by $e$. Now 
let $T:A\rightarrow A$  be given by $T(a)=2a$ for each $a\in A$. Then $S_T(e,\cdot)=T$ which is 
bijective, and so not compact. 
\end{ex}
  We now give examples to show that two Banach algebras 
may be isomorphic in the category of Banach algebras with bounded cf-homomorphisms without being isomorphic in the usual category of Banach 
algebras.
\begin{ex}
Let $n\in\N$ and let $A$ and $B$ be non-isomorphic Banach algebras, each with underlying vector space $\C^n$. Clearly, 
the identity map from $\C^n$ to itself defines a compact linear isomorphism from $A$ to $B$ and so  it defines an isomorphism in the category of Banach algebras with bounded cf-homomorphisms.  In particular we may take $A$ to be $\C$ with the usual product and $B$ to be $\C$ with zero product. If we let $A=C(\{1,2,3,4\})$ and $B=\mathcal B(\ell^2(\{1,2\}))$ (the algebra of $2\times 2$ matrices over $\C$)  we have an example where both algebras are unital $C^*$-algebras.
\end{ex}
We now give an example of infinite-dimensional Banach algebras which are  isomorphic in the category of Banach algebras with bounded cf-homomorphisms without being isomorphic in the usual category of Banach 
algebras.
\begin{ex}
Let $A=\ell^\infty$ with the pointwise product and let 
$B$ be the vector space  $\mathcal B(\ell^2(\{1,2\}))\oplus A$ with the norm $\norm{(\mathbf A,a)}=\max\{\norm{\mathbf A},\norm a\}$ 
and the product $(\mathbf A,a)(\mathbf B,b)=(\mathbf{AB},ab)$ $(\mathbf A\in \mathcal B(\ell^2(\{1,2\}), a\in A)$. Then $B$ is a 
Banach algebra.
We define a linear map $T:A\rightarrow B$ by 
\[
(a_n)_{n\in\N}\rightarrow \left(\left[\begin{array}{ll}a_1,&a_2\\a_3,&a_4\end{array}\right],(a_{n-4})_{n\in\N}\right).
\]
It is easy to check that $T$ is a bounded linear isomorphism. Also 
\[
S_T\left((a_n)_{n\in\N},(b_n)_{n\in\N}\right)=\left(\left[\begin{array}{ll}a_1,&a_2\\a_3,&a_4\end{array}\right]
\left[\begin{array}{ll}b_1,&b_2\\b_3,&b_4\end{array}\right]-\left[\begin{array}{ll}a_1b_1,&a_2b_2\\a_3b_3,&a_4b_4\end{array}\right],
0\right)
\]
and so $S_T$ is of finite rank. Thus $T$ is an isomorphism in the category of Banach algebras and cf-homomorphisms. Clearly, $A$ is commutative and $B$ is not so they are not isomorphic as Banach algebras.
\end{ex}
We now give an example of a bounded $1$df-homomorphism (and hence of a cf-homomorphism) that is neither a homomorphism nor a compact linear map. 
\begin{ex}\label{nhex}
Let $A$ be $\ell^\infty$ with pointwise multiplication. We let $e_k\in A$ be the sequence with $1$ in the $k$th place and $0$ in all 
other places. We define a bounded linear map $T:A\rightarrow A$ by $T(a)=a+a_1e_1$, $(a=\seq{a}{k}\in A)$. Then $T$ is a linear isomorphism from $\ell^\infty$ to itself and, hence, is not weakly-compact. Also, $e_1^2=e_1$ and $T(e_1)=2e_1$, so
\[
T(2e_1e_1)=T(2e_1)=4e_1\ne8e_1=T(2e_1)T(e_1).
\] 
Hence, $T$ is not a homomorphism.
However, for $a=\seq{a}{k},\,b=\seq{b}{k}\in A$,
\begin{eqnarray*}
S_T(a,b)&=&T(ab)-T(a)T(b)\\
&=&ab+a_1b_1e_1-(a+a_1e_1)(b+b_1e_1)\\
&=&ab+a_1b_1e_1-(ab+3a_1b_1e_1)=-2a_1b_1e_1.
\end{eqnarray*}
This has rank 1 and so is compact.
\end{ex}
Since cf-homomorphisms are ``a compact map away from being homomorphisms'' one may conjecture that if $A$ and $B$ are Banach 
algebras, $T_1:A\rightarrow B$ is a continuous homomorphism and $T_2:A\rightarrow B$ is a compact linear map then $T:=T_1+T_2$ must be a 
cf-homomorphism. The following example shows that this is not true, even if $T_2$ is a rank 1 homomorphism.
\begin{ex}
Let $A$ and $e_k\in A$ be as in Example \ref{nhex} and denote the identity element of $A$ by $1$. Let $T_1:A\rightarrow A$ be the identity homomorphism $T_1(a)=a$, 
$a\in A$ 
and let $T_2:A\rightarrow A$ be the bounded, rank 1, linear map given by $T_2(a)=a_11$. Then, if $T=T_1+T_2$, 
\[
T(e_k)=\left\{
\begin{array}{ll}
e_1+1&\textrm{if } k=1,\\
e_k&\textrm{otherwise}.
\end{array}.\right.
\]
Hence, for $k>1$, 
\[
S_T(e_1,e_k)=T(e_1)T(e_k)-T(e_1e_k)=e_k,
\]
so 
\[
(e_k)_{k=2}^\infty\subseteq S_T\left(\overline{\ball(A)}^{(2)}\right)\subseteq\overline{S_T\left(\ball(A)^{(2)}\right)},
\]
but $(e_k)_{k=2}^\infty$ has no convergent subsequence, so $T$ is not a cf-homomorphism.
\end{ex}
\section{Commutative $C^*$-algebras.}
In this section we discuss how these notions relate to functions on locally-compact, Hausdorff topological spaces. Many of the results could be extended to more general classes  of Banach function algebras, but to avoid having to give a large number of definitions we shall stick to the case of commutative $C^*$-algebras. 
\subsection{Nowhere-zero preserving maps}
The following is the Gleason-Kahane-\.Zelazko theorem, which may be found as \cite[Theorem 2.3]{Jarosz}. The author would like to thank Joel Feinstein for pointing him towards this result.
\begin{prop}\label{GKZ}
 Let $A$ be a Banach algebra with a unit denoted $1$ and let $\phi$ be a linear functional on $A$. Assume that for each invertible $f\in A$, $\phi(f)\ne0$. Then $\phi/\phi(1)$ is multiplicative. 
\end{prop}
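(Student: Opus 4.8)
The plan is to prove the standard Gleason--Kahane--\.Zelazko conclusion directly, deriving continuity of $\phi$ along the way rather than assuming it. First I would set $\psi=\phi/\phi(1)$, which is legitimate because $1$ is invertible and so $\phi(1)\neq 0$; then $\psi(1)=1$ and the hypothesis becomes $\psi(f)\neq 0$ for every invertible $f$. The first real step is to show that $\psi(a)\in\sigma(a)$ for every $a\in A$, where $\sigma(a)$ is the spectrum: if $\lambda\notin\sigma(a)$ then $a-\lambda 1$ is invertible, whence $0\neq\psi(a-\lambda 1)=\psi(a)-\lambda$, so $\psi(a)\neq\lambda$. Consequently $\abs{\psi(a)}$ is at most the spectral radius of $a$, which is at most $\norm a$, so $\psi$ is automatically bounded with $\norm\psi=1$. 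This reduces everything to proving that $\psi$ is multiplicative.

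The analytic heart of the argument is to show that $\psi$ is a Jordan functional, i.e. $\psi(a^2)=\psi(a)^2$ for all $a$. It suffices to treat $a\in\ker\psi$: for such $a$ I would consider the entire function $f(\lambda)=\psi(\exp(\lambda a))=\sum_{n=0}^\infty \psi(a^n)\lambda^n/n!$. Since $\exp(\lambda a)$ is invertible for every $\lambda\in\C$, with inverse $\exp(-\lambda a)$, the hypothesis gives $f(\lambda)\neq 0$ everywhere; moreover $\abs{f(\lambda)}\le\norm\psi\,\norm{\exp(\lambda a)}\le\exp(\abs\lambda\,\norm a)$, so $f$ is a nowhere-vanishing entire function of exponential type with $f(0)=1$ and $f'(0)=\psi(a)=0$. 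By the Hadamard factorisation theorem such a function must have the form $f(\lambda)=\exp(\beta\lambda)$, and $f'(0)=0$ forces $\beta=0$, so $f\equiv 1$; comparing Taylor coefficients then gives $\psi(a^2)=0$. For general $a$ I would apply this to $a-\psi(a)1\in\ker\psi$ and expand to obtain $\psi(a^2)=\psi(a)^2$. I expect this Hadamard step --- producing a zero-free entire function of finite exponential type and reading off its form --- to be the main obstacle, being the only genuinely non-elementary input.

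Polarising $\psi(a^2)=\psi(a)^2$ (replacing $a$ by $a+b$) yields $\psi(ab+ba)=2\psi(a)\psi(b)$ for all $a,b$. In the commutative case this already finishes the proof, since there $ab=ba$. For the general case I would adjoin a trace step: fix $a,b$ and set $g(\lambda)=\exp(\lambda a)\,b\,\exp(-\lambda a)$, an $A$-valued entire function with $g(0)=b$ and $g'(0)=ab-ba$. Each $g(\lambda)$ is similar to $b$, hence $\sigma(g(\lambda))=\sigma(b)$, so the scalar entire function $\lambda\mapsto\psi(g(\lambda))$ takes values in the compact set $\sigma(b)$ and is thus bounded; by Liouville's theorem it is constant, and differentiating at $0$ gives $\psi(ab-ba)=0$. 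Therefore $\psi(ab)=\psi(ba)$, and combining this with $\psi(ab+ba)=2\psi(a)\psi(b)$ gives $\psi(ab)=\psi(a)\psi(b)$, completing the proof that $\psi=\phi/\phi(1)$ is multiplicative.
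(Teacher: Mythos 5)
Your proposal is correct, but it is worth pointing out that the paper offers no proof of this proposition at all: it is imported as a known result (the Gleason--Kahane--\.Zelazko theorem) with a citation to Jarosz's survey, so there is no ``paper proof'' to match. What you have written is essentially the classical \.Zelazko argument, and each step checks out: the observation that $\psi(a)\in\sigma(a)$ for all $a$ (so that $\psi$ is automatically bounded with $\psi(1)=1$, needed later to push $\psi$ through power series); the zero-free entire function $\lambda\mapsto\psi(\exp(\lambda a))$ of exponential type, pinned down by Hadamard factorisation to give the Jordan property $\psi(a^2)=\psi(a)^2$; polarisation to get $\psi(ab+ba)=2\psi(a)\psi(b)$; and the similarity--Liouville argument on $\lambda\mapsto\psi(e^{\lambda a}\,b\,e^{-\lambda a})$, whose values stay in the compact set $\sigma(b)$, to conclude $\psi(ab-ba)=0$. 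That last step is not optional decoration: the proposition as stated does not assume $A$ commutative, and many textbook versions of GKZ treat only the commutative case, so you were right to supply the commutator argument explicitly. The trade-off between your route and the paper's is the obvious one: the citation keeps the paper short and defers to the literature, while your argument makes the result self-contained at the price of invoking Hadamard's factorisation theorem --- which, as you correctly identify, is the only genuinely non-elementary input.
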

This has the following corollary.
\begin{cor}\label{GKZ1}
 Let $A$ be a Banach algebra with a unit denoted $1$, $B$ be a  commutative, semisimple Banach algebra and 
$T:A\rightarrow B$ be a linear map.  Assume that for each invertible $f\in A$, $T(f)$ is invertible in $B$. Then $T/T(1)$ is multiplicative. 
\end{cor}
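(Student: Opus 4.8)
The plan is to reduce the statement about the $B$-valued map $T$ to the scalar case covered by Proposition \ref{GKZ}, using the characters of $B$ both to invoke that proposition and, via semisimplicity, to recover an identity in $B$ from its scalarisations.

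First I would note that $1$ is invertible in $A$, so by hypothesis $T(1)$ is invertible in $B$; in particular $T(1)^{-1}$ exists and, since $B$ is commutative, the map $T/T(1)\colon a\mapsto T(1)^{-1}T(a)$ is a well-defined linear map $A\rightarrow B$. Let $\Phi_B$ denote the character space of $B$. Because $B$ is commutative and semisimple, the Gelfand transform is injective, so the characters $\chi\in\Phi_B$ separate the points of $B$; thus to prove that two elements of $B$ coincide it suffices to show that every $\chi\in\Phi_B$ takes the same value on them.

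Next, fix $\chi\in\Phi_B$ and consider the linear functional $\chi\circ T$ on $A$. If $f\in A$ is invertible then $T(f)$ is invertible in $B$, and since $\chi$ is multiplicative with $\chi(T(f))\chi(T(f)^{-1})=1$ we get $\chi(T(f))\ne0$. Hence $\chi\circ T$ is a linear functional that does not vanish at any invertible element of $A$, so Proposition \ref{GKZ} applies and tells us that $\psi_\chi:=(\chi\circ T)/(\chi\circ T)(1)$ is multiplicative on $A$; here $(\chi\circ T)(1)=\chi(T(1))\ne0$ because $T(1)$ is invertible, so $\psi_\chi$ is well defined and equals $\chi(T(1))^{-1}(\chi\circ T)$.

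Finally I would verify the multiplicativity of $T/T(1)$ characterwise. For $a,b\in A$, applying $\chi$ and using that $\chi$ is a homomorphism,
$$\chi\big((T/T(1))(ab)\big)=\chi(T(1))^{-1}\chi(T(ab))=\psi_\chi(ab),$$
while
$$\chi\big((T/T(1))(a)\,(T/T(1))(b)\big)=\chi(T(1))^{-1}\chi(T(a))\,\chi(T(1))^{-1}\chi(T(b))=\psi_\chi(a)\psi_\chi(b).$$
Since $\psi_\chi$ is multiplicative these two scalars agree for every $\chi\in\Phi_B$, and as the characters separate points of $B$ we conclude $(T/T(1))(ab)=(T/T(1))(a)(T/T(1))(b)$; that is, $T/T(1)$ is multiplicative. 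The only point requiring care is the passage from the scalar identities back to the identity in $B$, which is exactly where semisimplicity of $B$ is used; everything else is a direct application of Proposition \ref{GKZ} to $\chi\circ T$, together with the observation that invertibility of $T(f)$ forces $\chi(T(f))\ne0$.
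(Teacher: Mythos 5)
Your proof is correct and takes essentially the same approach as the paper's: compose $T$ with each character $\chi$ of $B$, observe that invertibility of $T(f)$ forces $\chi(T(f))\neq 0$ so that Proposition \ref{GKZ} applies to $\chi\circ T$, and then use semisimplicity (characters separating points of $B$) to recover the multiplicativity of $T/T(1)$ from the scalar identities. The paper phrases this more tersely via the Gelfand transform, viewing $B$ as a Banach function algebra on its character space, but the substance is identical; your write-up just makes the characterwise verification explicit.
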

\begin{proof}
Via the Gel$'$fand transform, we may assume that $B$ is a Banach function algebra on its character space. Therefore, $b\in B$ is invertible if $\phi(b)\ne 0$ for each multiplicative linear functional $\phi$ on $B$, and $T$ is multiplicative if and only if $T\circ \phi$ is multiplicative for each multiplicative linear functional $\phi$ on $B$. Thus, the result follows from Proposition \ref{GKZ}.
\end{proof}
\begin{dfn}
 We say a topological space is \emph{perfect} if it is non-empty and has no isolated points.
\end{dfn}

\begin{thm}\label{perfect}
Let $X$ and $Y$ be infinite, locally-compact, Hausdorff spaces.
If  $X$ is perfect  and $T:C_0(X)\rightarrow C_0(Y)$ is a bijective bounded semi-wcf-morphism, then $T$ is multiplicative and thus is of the form $T(f)=f\circ \psi$ for some homeomorphism $\psi:Y\rightarrow X$.
\end{thm}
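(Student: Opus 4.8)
The plan is to prove directly that the failure of multiplicativity $S_T$ vanishes identically; the representation $T(f)=f\circ\psi$ then follows from standard Gelfand theory, since a bounded bijective algebra homomorphism $C_0(X)\to C_0(Y)$ (its inverse bounded by the open mapping theorem) induces a homeomorphism $\psi:Y\to X$ of the character spaces with $T(f)=f\circ\psi$. So the whole problem reduces to showing $S_T=0$, and I would not go through the Gleason--Kahane--\.Zelazko route at all, but through weakly compact multiplication operators.

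The technical heart, and the only place perfectness enters, is the following lemma: \emph{if $Z$ is a perfect, locally-compact, Hausdorff space and $h\in C_0(Z)$, then the multiplication operator $M_h:C_0(Z)\to C_0(Z)$, $g\mapsto hg$, is weakly compact only if $h=0$.} To prove it I would use the standard characterization (Gantmacher) that $U\in\mathcal B(E,F)$ is weakly compact iff $U^{**}(E^{**})\subseteq F$. For $z_0\in Z$ let $F_{z_0}\in C_0(Z)^{**}=M(Z)^*$ be the bounded functional $\mu\mapsto\mu(\{z_0\})$. Since $M_h^*:\mu\mapsto h\mu$, a direct computation gives
\[
M_h^{**}F_{z_0}=h(z_0)\,F_{z_0}.
\]
When $z_0$ is not isolated, $F_{z_0}$ does not lie in the canonical image of $C_0(Z)$ (testing against point masses, such a functional would have to be integration against the discontinuous function $\mathbf 1_{\{z_0\}}$), so weak compactness of $M_h$ forces $h(z_0)=0$; as $Z$ is perfect this gives $h=0$. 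This is the step I expect to be the main obstacle, since it is where the geometry enters and where the statement genuinely fails without perfectness.

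Two applications of the lemma then finish the argument. First I would show $Y$ is perfect: if $y_0\in Y$ were isolated, then $p:=\mathbf 1_{\{y_0\}}\in C_0(Y)$ is a nonzero idempotent and $e:=T^{-1}(p)\neq0$; fixing the first variable, $g\mapsto S_T(e,g)=M_{T(e)}T(g)-T(eg)=M_pT(g)-T(eg)$ is weakly compact, while $M_pT(g)=T(g)(y_0)\,p$ is rank one, so $g\mapsto T(eg)=TM_e(g)$ is weakly compact, and composing with the bounded map $T^{-1}$ shows $M_e$ is weakly compact (the ideal property of weakly compact operators; cf.\ Lemma~\ref{comcom}). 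Since $X$ is perfect the lemma forces $e=0$, a contradiction. Second, for each fixed $f$ observe the operator identity $S_T(f,\cdot)=M_{T(f)}\circ T-T\circ M_f$, whence
\[
T M_f T^{-1}=M_{T(f)}-S_T(f,\cdot)\circ T^{-1}.
\]
The last term is weakly compact, so $TM_fT^{-1}\equiv M_{T(f)}$ modulo the ideal $\mathcal W$ of weakly compact operators on $C_0(Y)$. But $f\mapsto TM_fT^{-1}$ is an algebra homomorphism (it is $\mathrm{Ad}(T)$ applied to the homomorphism $f\mapsto M_f$), so in the quotient $\mathcal B(C_0(Y))/\mathcal W$ the map $f\mapsto[M_{T(f)}]$ is multiplicative; combined with $M_{T(f)}M_{T(g)}=M_{T(f)T(g)}$ this yields $[M_{T(fg)}]=[M_{T(f)T(g)}]$, i.e.\ $M_{S_T(f,g)}$ is weakly compact for all $f,g$. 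As $Y$ is now known to be perfect, the lemma gives $S_T(f,g)=0$, so $T$ is multiplicative and the Gelfand-theoretic conclusion applies.

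Everything after the lemma is formal manipulation of the ideal $\mathcal W$ together with Lemma~\ref{comcom} and the open mapping theorem; the content lives entirely in the lemma, and its dependence on perfectness matches the counterexamples, since the $1$df-homomorphisms on $\ell^\infty$ and $c_0$ live precisely on spaces with isolated points.
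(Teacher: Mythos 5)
Your proof is correct, and it takes a genuinely different route from the paper's. The paper first reduces to the unital case via one-point compactifications, shows $T(1_X)=1_Y$, and then argues by contradiction through the Gleason--Kahane--\.Zelazko theorem (Corollary \ref{GKZ1}): if $T$ is not multiplicative, neither is $T^{-1}$, so some non-invertible $f\in C(X)$ has $T(f)$ invertible; after perturbing $f$ so that it vanishes on a nonempty open set $U$ (the invertibles being open), one chooses a bounded sequence $(g_n)$ supported in $U$ with no weakly convergent subsequence, and since $fg_n=0$, $S_T(f,g_n)=-T(f)T(g_n)$ is the image of $g_n$ under a Banach space isomorphism, contradicting weak compactness of $S_T(f,\cdot)$. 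You instead make the key mechanism explicit as a standalone lemma --- a weakly compact multiplication operator $M_h$ on $C_0(Z)$ with $Z$ perfect has $h=0$, proved via Gantmacher's theorem and the identity $M_h^{**}F_{z_0}=h(z_0)F_{z_0}$ in $M(Z)^*$ --- and then argue algebraically modulo the closed two-sided ideal of weakly compact operators, using that $f\mapsto TM_fT^{-1}$ is a homomorphism to conclude that $M_{S_T(f,g)}$ is weakly compact, hence $S_T(f,g)=0$. The two proofs exploit perfectness through the same phenomenon (the paper's observation that weak compactness of multiplication by $T(1_X)-1_Y$ forces that function to vanish is exactly an instance of your lemma), but your route buys two things: it bypasses unitization and the Gleason--Kahane--\.Zelazko machinery entirely, working directly with the non-unital algebras $C_0(X)$ and $C_0(Y)$, and it explicitly derives that $Y$ is perfect from the perfectness of $X$ (via the idempotent $\mathbf 1_{\{y_0\}}$ at a hypothetical isolated point) --- a fact the paper's proof invokes (``Since $Y$ is perfect\dots'') but never establishes, since only $X$ is assumed perfect in the statement; your argument therefore patches a small gap in the paper's own exposition. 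What the paper's route buys is lighter functional-analytic machinery (sequential arguments rather than biduals and Gantmacher's theorem) and a visible connection between wcf-type hypotheses and invertibility-preserving linear maps. Both proofs rest on the open mapping theorem and on the ideal property of weakly compact operators.
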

\begin{proof}
First, we reduce this to the unital case. Assume for now that the result holds in the case where $X$ and $Y$ are both compact (i.e. that $C_0(X)=C(X)$ and $C_0(Y)=C(Y)$ are unital). Let $X$ and $Y$ be locally-compact, Hausdorff spaces and let $T:C_0(X)\rightarrow C_0(Y)$ be a bounded semi-wcf-homomorphism. Let $\widetilde X$ and $\widetilde Y$ be the unconditional one-point compactification of $X$ and $Y$ respectively (that is the compact space obtained by adjoining an extra point whether or not the original space was compact). Let $f\in C_0(X)$. We define a map as follows:
\func{\widetilde T}{C(\widetilde X)}{C(\widetilde Y)}{f+\alpha1_{\widetilde X}}{T(f)+\alpha1_{\widetilde Y}.}

Now, let $g\in C_0(X)$  and $(f_i)_i\subset C_0(X)$  be a bounded net. Then $(S_{\tilde T}(f_i,g))_i=(S_T(f_i,g))_i$ has a weakly convergent subnet. By symmetry it follows that $T$ is a semi-wcf-homomorphism.  Hence, by assumption, $T$ is multiplicative. Now let $f,g\in C_0(X)$ and $\alpha, \beta \in \C$. We have 
\begin{eqnarray}
 \nonumber S_{\widetilde T}((f+\alpha1_{\widetilde X}),(g+\beta1_{\widetilde X}))&=&\\
\nonumber\widetilde T((f+\alpha1_{\widetilde X})(g+\beta1_{\widetilde X}))-\widetilde T(f+\alpha 1_X)\widetilde T(g+\beta 1_Y)&=&\\
 \nonumber\widetilde T(fg+\beta f+\alpha g +\alpha\beta 1_X)-\widetilde T(f)\widetilde T(g)-\widetilde T(\alpha g)-\widetilde T(\beta f) -\alpha\beta 1_X&=&\\
\nonumber\widetilde T(fg)-\widetilde T(f)\widetilde T(g)&=&\\
S_T(f,g)&=&0,
\end{eqnarray}
and so $T$ is multiplicative.

Henceforth we assume that $X$ and $Y$ are compact. Let $T:C_0(X)\rightarrow C_0(Y)$ be a bijective bounded semi-wcf-morphism.
We show that $T(1_X)=1_Y$. Assume otherwise; then 
\[
 S_T(1_X,\cdot)=T(1_X)T(\cdot)-T(\cdot)=(T(1_X)-1_Y)T(\cdot).
\]
Since $T$ is a bounded linear isomorphism, it follows that multiplication by $T(1_X)-1_Y$ is weakly compact. It is easy to see that this means that there is no infinite subset of $Y$ on which $T(1_X)-1_Y$ is not bounded below. Since $Y$ is perfect it follows that $T(1_X)=1_Y$.

We now assume, towards a contradiction, that $T$ is not multiplicative. Then $T^{-1}$ is also not multiplicative. By Corollary \ref{GKZ1} 
it follows that there exists a non-invertible $f\in C(X)$ such that $T(f)$ is invertible. Since the set of invertibles in $C(Y)$ is open, and since for any $x\in X$, 
\[\{f\in C(X):\textrm{there is a neighbourhood $U$ of $x$ with }f(U)=\{0\}\}\]
is dense in $\{f\in C(X):f(x)=0\}$, we may assume without loss of generality that there exists some non-empty, open subset $U\subset X$ such that 
$f(U)=\{0\}$. We can then take $(g_n)\subset C(X)$ such that, for each $n$, we have that $g_n(X\setminus U)\subseteq\{0\}$ and $(g_n)$ has no 
weakly convergent subsequence. Thus, we have that $fg_n=0$ and so
\[
S_T(f,g_n)=T(fg_n)-T(f)T(g_n)=-T(f)T(g_n).
\]
Since $T$ is a Banach space isomorphism and $T(f)$ is invertible, the map $g\mapsto -T(f)T(g_n)$ is a Banach space isomorphism. Thus $S_T(f,g_n)$ has no weakly 
convergent subsequence and so $S_T(f,\cdot)$ is not weakly compact, which contradicts our original assumption that $T$ is a semi-wcf-homomorphism.

It is standard that Banach algebra isomorphisms from $C_0(X)$ to $C_0(Y)$ are of the form $f\mapsto f\circ \psi$ for some homeomorphism $\psi: Y\rightarrow X$.
\end{proof}

For locally-compact Hausdorff spaces with isolated points the situation is quite different. Indeed, we have the following.
\begin{cor}
 Let $X$ be an infinite, locally-compact Hausdorff  space. Then the following are equivalent:
\begin{itemize}
 \item [(a)] every bounded semi-wcf-isomorphism from $C_0(X)$ to $C_0(X)$ is multiplicative;
\item[(b)] every bounded $1$df-isomorphism is nowhere-zero preserving;
\item[(c)] $X$ is perfect.
\end{itemize}
\end{cor}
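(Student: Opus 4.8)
The plan is to prove the cycle $(c)\Rightarrow(a)\Rightarrow(b)\Rightarrow(c)$. The first implication is immediate: $(c)\Rightarrow(a)$ is exactly Theorem \ref{perfect} specialised to $Y=X$, since that theorem asserts that when $X$ is perfect every bijective bounded semi-wcf-morphism $T:C_0(X)\to C_0(X)$ is multiplicative, which is statement $(a)$.

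For $(a)\Rightarrow(b)$ I would first observe that a bounded $1$df-isomorphism is in particular a bounded semi-wcf-isomorphism. Indeed, if the range of $S_T$ is contained in a subspace of dimension at most $1$, then for each fixed $a$ the maps $S_T(a,\cdot)$ and $S_T(\cdot,a)$ take values in that same subspace, so they are of finite rank and hence (weakly) compact. Thus hypothesis $(a)$ applies and forces such a $T$ to be multiplicative; being a multiplicative bijection it has the form $f\mapsto f\circ\psi$ for a homeomorphism $\psi$, and a map of this form clearly sends nowhere-vanishing functions to nowhere-vanishing functions. Hence $T$ is nowhere-zero preserving, as required.

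The substance lies in $(b)\Rightarrow(c)$, which I would prove by contraposition: assuming $X$ is not perfect I produce a bounded $1$df-isomorphism that is \emph{not} nowhere-zero preserving. Let $x_0$ be an isolated point, so that its indicator $e_{x_0}$ is continuous and lies in $C_0(X)$, and fix any point $x_1\neq x_0$ (possible since $X$ is infinite). Modelling the construction on Example \ref{nhex}, I set $T(f)=f+f(x_1)e_{x_0}$. This alters only the value at $x_0$, replacing $f(x_0)$ by $f(x_0)+f(x_1)$, so $T$ is a bounded linear bijection with bounded inverse $g\mapsto g-g(x_1)e_{x_0}$. Using $e_{x_0}^2=e_{x_0}$ and $g\,e_{x_0}=g(x_0)e_{x_0}$, a direct computation shows that $S_T(f,g)$ is always a scalar multiple of $e_{x_0}$, so $S_T$ has rank at most $1$ and $T$ is a $1$df-isomorphism. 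Choosing a nowhere-vanishing element with $f(x_0)=-f(x_1)$ gives $(Tf)(x_0)=f(x_0)+f(x_1)=0$, so $T$ carries a nowhere-vanishing function to one with a zero and is therefore not nowhere-zero preserving.

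The main obstacle is the final step of $(b)\Rightarrow(c)$: guaranteeing a suitable nowhere-vanishing element mapped to a function with a zero, since for general locally compact $X$ a globally nowhere-vanishing function in $C_0(X)$ need not exist. I would resolve this exactly as in the proof of Theorem \ref{perfect}, by passing to the one-point compactification and working in the unital algebra $C(\widetilde X)$, where ``nowhere-vanishing'' coincides with ``invertible''; there one may prescribe the value $-1$ at the isolated point $x_0$ and $1$ elsewhere, obtaining an invertible element whose image under the extended map vanishes at $x_0$. With this reduction the three implications close up and the equivalence of $(a)$, $(b)$ and $(c)$ follows.
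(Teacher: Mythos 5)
Your skeleton is exactly the paper's: the cycle $(c)\Rightarrow(a)\Rightarrow(b)\Rightarrow(c)$, with Theorem \ref{perfect} giving $(c)\Rightarrow(a)$, the rank-one observation giving $(a)\Rightarrow(b)$ (the paper compresses this step to ``clearly''), and, for $(b)\Rightarrow(c)$, essentially the paper's map: your $T(f)=f+f(x_1)e_{x_0}$ is the paper's $T$ (which sends $f(x_0)$ to $f(x_0)-f(x_1)$ and leaves other values alone) up to a sign, and your verification that $S_T$ has rank at most one is correct.

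The gap is in the very last step of $(b)\Rightarrow(c)$. Statement (b) quantifies over maps of $C_0(X)$, so to negate it you must exhibit a nowhere-vanishing function \emph{in} $C_0(X)$ that $T$ sends to a function with a zero; your proposed witness $h$ (equal to $-1$ at $x_0$ and $1$ elsewhere) lies in $C(\widetilde X)$, not in $C_0(X)$ when $X$ is non-compact, so it says nothing about $T$. Worse, the computation fails for the extension you actually invoke: the unitization extension used in Theorem \ref{perfect} is $\widetilde T(f+\alpha 1_{\widetilde X})=T(f)+\alpha 1_{\widetilde X}$, and writing $h=1_{\widetilde X}-2e_{x_0}$ gives $\widetilde T(h)=T(-2e_{x_0})+1_{\widetilde X}=-2e_{x_0}+1_{\widetilde X}=h$ (because $e_{x_0}(x_1)=0$), which is nowhere zero, so $h$ is not a witness. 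What you have in mind is extending $T$ by its \emph{formula}, $g\mapsto g+g(x_1)e_{x_0}$, to all of $C(\widetilde X)$; that map does send $h$ to a function vanishing at $x_0$, but it differs from $\widetilde T$ on constants, and in any case it is a $1$df-isomorphism of the wrong algebra. The paper's resolution needs no nowhere-vanishing function to be constructed at all: given an \emph{arbitrary} nowhere-zero $f\in C_0(X)$, modify it only at the isolated point, setting (in your sign convention) $g:=f-(f(x_0)+f(x_1))e_{x_0}$; then $g(x_0)=-f(x_1)\ne0$ and $g=f$ off $x_0$, so $g$ is nowhere zero, while $T(g)(x_0)=g(x_0)+g(x_1)=0$. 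This conditional modification argument stays inside $C_0(X)$ and is the idea your proof is missing. (Your background worry --- that $C_0(X)$ may contain no nowhere-vanishing function at all, e.g.\ when $X$ is uncountable and discrete --- is genuine, but in that case (b) is vacuous and no construction, in particular not a passage to $C(\widetilde X)$, can refute it; the conditional argument is exactly what the statement supports.)
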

\begin{proof}
 Clearly (a) implies (b) and it follows from Theorem \ref{perfect} that (c) implies (a). It remains to show that (b) implies 
(c). Assume that $X$ is not perfect, let $x_0\in X$ be an isolated point and $x_1\in X\-\{x_0\}$. Define $e_{x_0}\in C_0(X)$ by $e_{x_0}(x_0)=1$ and $e_{x_0}\left(X\-\{x_0\}\right)=\{0\}$.  Then we can define a Banach space isomorphism 
$T:C_0(X)\rightarrow C_0(X)$ by
\[
T(f)(x)=\left\{
\begin{array}{ll}
f(x_0)-f(x_1)&\textrm{if } x=x_0,\\
f(x)&\textrm{otherwise}.
\end{array}.\right.
\]
Now $S_T(A\times A)\in e_{x_0}\C$, and so $T$ is a $1$df-isomorphism. However, if $f\in C_0(X)$ is nowhere zero, then $g:=f+(f(x_1)-f(x_0))e_{x_0}\in A$ is also nowhere zero but has $T(g)(x_0)=0$. The result follows.
\end{proof}

\section{[W]cf-splitting of Banach extentions and compactness of Kamowitz cocycles}
These notions of compact failure of multiplicativity fit together nicely with theory of Banach extensions and Kamowitz's cohomology theory for Banach algebras (see \cite{Kam} and \cite{BDL}). Before we discuss this connection, we shall use the next two subsections to lay out some definitions and basic results we shall need from this field of study.
\subsection{Hochschild cohomology}\label{Hc}
Let $A$ be an algebra and $E$ an $A$-bimodule. For $x\in E$, we define $\delta_x:A\rightarrow E$ by 
$\delta_x(a)=a\cdot x-x\cdot a$ and define a linear map $\delta^0:E\rightarrow \mathcal L(A,E)$ by 
$x\mapsto \delta_x$. For $n\in\N$ and $T\in\mathcal L^n(A,E)$ we let
\begin{eqnarray*}
\delta^nT(a_1, \dots,a_{n+1})=&& a_1\cdot T(a_2,\dots, a_{n+1})+(-1)^{n+1}T(a_1\dots, a_n)\cdot a_{n+1}\\
&&+\sum_{j=1}^n(-1)^jT(a_1,\dots,a_{j-1},a_ja_{j+1},a_{j+2},\dots,a_{n+1}).
\end{eqnarray*}
It is standard (see for example \cite[p. 127]{Dales}) that this yields a complex,
\begin{eqnarray*}
 \mathcal L^\bullet(A,E):0&\rightarrow&E\stackrel{\delta^0}{\rightarrow}\mathcal L(A,E)\stackrel{\delta^1}{\rightarrow}
\mathcal L^2(A,E)\dots\\
&\stackrel{\delta^{n-1}}{\rightarrow}&\mathcal L^n(A,E)\stackrel{\delta^n}{\rightarrow}\mathcal L^{n+1}(A,E)\dots,
\end{eqnarray*}
of vector spaces and linear maps.
We define the following vector spaces:
$$ Z^n(A,E):= \textrm{ker }\delta^n,\quad  N^n(A,E):= \textrm{im }\delta^{n-1}.$$
We call the elements of $Z^n(A,E)$ \index{cocycle}\emph{$n$-cocycles} and the elements of $N^n$ \index{coboundary}\emph{$n$-coboundaries}.
\begin{dfn}Let $A$ be an algebra and $E$ an $A$-bimodule. We define the 
\emph{$n$th cohomology group of $A$ with coefficients in $E$} to be 
$$H^n(A,E):=Z^n(A,E)/N^n(A,E).$$ 
\end{dfn}
\begin{dfn}
Let $A$ and $I$ be algebras. An \index{extension}\emph{extension of $A$ by $I$} is a short exact sequence of algebras and algebra homomorphisms
\[
\Sigma=\Sigma(\mathfrak A;I):0\rightarrow I\stackrel{\iota}{\rightarrow}\mathfrak{A}\stackrel{q}{\rightarrow}A\rightarrow 0, 
\] 
where $\iota(I)$ is an ideal in $\mathfrak A$. We say $\Sigma$ is \index{extension!singular}\emph{singular} if $I^2=0$ (that is, $I$ has the zero 
multiplication). 

Two extensions, 
$\Sigma(\mathfrak A;I):0\rightarrow I\stackrel{\iota_{\mathfrak A}}{\rightarrow}\mathfrak{A}
\stackrel{q_{\mathfrak A}}{\rightarrow}A\rightarrow 0 $ 
and 
$\Sigma(\mathfrak B;I):0\rightarrow I\stackrel{\iota_{\mathfrak B}}{\rightarrow}\mathfrak{B}
\stackrel{q_{\mathfrak B}}{\rightarrow}A\rightarrow 0$ are \index{extension!s, equivalent}\emph{equivalent} if there is an algebra isomorphism 
$\psi:\mathfrak A\rightarrow\mathfrak B$ such that $\psi(x)=x$ ($x\in I$) and the diagram
\begin{equation}\label{comdia}
 \xymatrix{
0\ar[r]&I \ar[r]^{\iota_{\mathfrak A}}\ar@{<->}[d]_{\up{id}}&\mathfrak A\ar[r]^{q_\mathfrak A}\ar[d]_\psi&A \ar@{<->}[d]^{\up{id}}\ar[r] &0\\
0\ar[r]&I \ar[r]^{\iota_{\mathfrak B}}\ar[u]&\mathfrak B\ar[r]^{q_\mathfrak B}\ar[u]_{\psi^{-1}}&A \ar[r] &0}
\end{equation}
commutes. It is standard that this defines an equivalence relation on the class of all extensions of $A$ by $I$. We denote the equivalence class, with respect to 
equivalence, of an extension $\Sigma$ by $[\Sigma]$.  The extension $\Sigma$ \index{extension!splitting}\emph{splits} if there is a homomorphism $\theta:A\rightarrow \mathfrak A$ such that $q\circ\theta=\up{id_A}$; we call $\theta$ a \emph{splitting homomorphism for} $\Sigma$.
\end{dfn}
\subsection{Kamowitz cohomology} \label{Kamsub}\index{cohomology!Kamowitz}
Let $A$ be a Banach algebra and $E$ be a Banach $A$-bimodule. It is standard (see for example \cite[p.273]{Dales}) that for each 
$n\in\Z^+$, $\delta^n(\mathcal B^n(A,E))\subseteq \mathcal B^{n+1}(A,E)$. Furthermore, if we define, 
$\gamma^n:\mathcal B^n(A,E)\rightarrow \mathcal B^{n+1}(A,E)$ by $\gamma^n(T)=\delta^n(T)$,  then 
\begin{eqnarray*}
 \mathcal B^\bullet(A,E):0&\rightarrow&E\stackrel{\gamma^0}{\rightarrow}\mathcal B(A,E)\stackrel{\gamma^1}{\rightarrow}
\mathcal B^2(A,E)\dots\\
&\stackrel{\gamma^{n-1}}{\rightarrow}&\mathcal B^n(A,E)\stackrel{\gamma^n}{\rightarrow}\mathcal B^{n+1}(A,E)\dots
\end{eqnarray*}
is a complex of Banach spaces and continuous linear maps. We note that in the above complex it is more usual to call the maps ``$\delta^n$'' but we wanted to avoid any confusion with the purely algebraic case.
Thence we may define the following vector spaces:
\begin{eqnarray*}
\mathcal Z^n(A,E)&:=& \textrm{ker }\gamma^n,\quad \mathcal N^n(A,E):= \textrm{im }\gamma^{n-1};\\ 
\mathcal H^n(A,E)&:=&\mathcal Z^n(A,E)/\mathcal N^n(A,E);\\
\widetilde N^n(A,E)&:=& N^n(A,E)\cap\mathcal B(A,E)(=\mathcal Z^n(A,E)\cap N^n(A,E));\\
\widetilde H^n(A,E)&:=&\mathcal Z^n(A,E)/\widetilde N^n(A,E).
\end{eqnarray*}
\begin{dfn}
Let $A$ be a Banach algebra. A \emph{Banach extension of $A$ by $I$} is a short exact sequence of Banach algebras and bounded homomorphisms
$$\Sigma=\Sigma(\mathfrak A;I):0\rightarrow I\stackrel{\iota}{\rightarrow}\mathfrak{A}\stackrel{q}{\rightarrow}A\rightarrow 0 $$ 
where  $\iota(I)$ is a closed ideal in $\mathfrak A$. We say $\Sigma$ is \emph{admissible} if there exists a bounded linear map $Q:A\rightarrow\mathfrak A$ with 
$q\circ Q=\up{id}_{\mathfrak A}$ (i.e. if $\Sigma$ splits as a short exact sequence in the category of Banach spaces and bounded 
linear maps). A Banach extension \emph{splits strongly} if it has a splitting homomorphism that  is continuous.

Two Banach extensions, $\Sigma(\mathfrak A;I)$ and $\Sigma(\mathfrak B;I)$ of $A$ by $I$  are 
\emph{strongly equivalent} if there is a continuous 
algebra isomorphism, $\psi:\mathfrak A\rightarrow\mathfrak B$, such that $\psi(x)=x$ ($x\in I$) and the diagram (\ref{comdia}) commutes; 
it is standard that this defines an equivalence relation on the class of all Banach extensions of $A$ by $I$. We denote the 
equivalence class, with respect to strong equivalence, of a Banach extension $\Sigma$ 
by $[\Sigma]_s$.
\end{dfn}
If $\Sigma(\mathfrak A;I)$ is a Banach extension of $A$ and  $a,b\in \mathfrak A$ with $q(a)=q(b)$ then there exists $e\in I$ such that $a=b+e$. If in addition $\Sigma(\mathfrak A;I)$ is singular, then, for $x\in I$, we have
\begin{equation}\label{welldefined}
\iota(x)a=\iota(x)(b+\iota(e))=\iota(x)b,
\end{equation}
Hence, we may make $I$ into an $A$-bimodule by defining left and right actions as follows.
\[
x\cdot q(a)=\iota(x)a,\,q(a)\cdot x=a\iota(x),\qquad a\in\mathfrak A,x\in I
\]
If $E$ is a Banach $A$-bimodule and $\Sigma=\Sigma(\mathfrak A;I)$ is a singular Banach extension of $A$ 
such that $I$ with the above operations is isomorphic to $E$ as a Banach $A$-bimodule, we say $\Sigma$ is a 
\emph{singular Banach extension of $A$ by $E$}.
Now let $T\in\mathcal Z^2(A,E)$. We set
\begin{eqnarray*}
\mathfrak A_T=A\oplus_T E&=&\{(a,x):a\in A,\,x\in E\},\\
(a,x)(b,y)&=&\left(ab, a\cdot y+x\cdot b +T(a,b)\right),\,\left((a,x), (b,y)\in\mathfrak A_T\right)
\end{eqnarray*}
and define maps $\iota_T:E\rightarrow\mathfrak A_T, \,\iota_T(x)=(0,x)$ and $q_T:\mathfrak A_T\rightarrow A, \,q_T((a,x))=a$. It is then standard 
(see \cite[p.278]{Dales}) that $\mathfrak A_T$ is a Banach algebra with respect to a norm equivalent to the one given by 
\[
\norm{(a,x)}_1=\norm a + \norm x,\,((a,x)\in\mathfrak A_T),
\]
and that if we equip $\mathfrak A_T$ with this algebra norm  then 
\[
\Sigma(\mathfrak A_T;E):0\rightarrow E \stackrel{\iota_T}{\rightarrow}\mathfrak{A}\stackrel{q_T}{\rightarrow}A\rightarrow 0 
\] 
is a singular, admissible Banach extension of $A$ by $E$. We 
shall denote $\Sigma(\mathfrak A_T;E)$ by $\Sigma_T$. The following is a slightly rewritten version of \cite[2.8.12]{Dales}.
\begin{prop}\label{Dalesext}
Let $A$ be a Banach algebra, $E$ a Banach $A$-bimodule, and
$T, T'\in\mathcal Z^2(A,E)$. Then:
\begin{itemize} 
\item[(a)] if $T-T'\in \widetilde N^2(A,E)$, $\Sigma_T$ is equivalent to $\Sigma_{T'}$. Moreover, 
$$T+\widetilde N^2(A,E)\mapsto[\Sigma_T]$$ 
is a bijection from $\widetilde H^2(A,E)$ to the family of equivalence classes, with respect to equivalence, of singular, admissible Banach extensions of $A$ by $E$; 
\item[(b)] if $T-T'\in \mathcal N^2(A,E)$, $\Sigma_T$ is strongly equivalent to $\Sigma_{T'}$. Moreover, 
$$T+\mathcal N^2(A,E)\mapsto[\Sigma_T]_s$$
is a bijection from $\mathcal H^2(A,E)$ to the family of equivalence classes, with respect to strong equivalence, of singular, admissible Banach extensions of $A$ by $E$.
\end{itemize}
\end{prop}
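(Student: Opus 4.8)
The plan is to establish this as the standard correspondence between bounded $2$-cocycles and singular admissible Banach extensions, following the usual Hochschild--Kamowitz argument as in \cite[2.8.12]{Dales}, but tracking throughout whether the maps in play are merely linear or genuinely bounded, since this distinction is exactly what separates part (a) (ordinary equivalence, classified by $\widetilde H^2$) from part (b) (strong equivalence, classified by $\mathcal H^2$). Recall from the construction of $\mathfrak A_T$ that the cocycle identity $T\in\mathcal Z^2(A,E)$ is precisely the associativity condition for the twisted product, and note that $\delta^1\rho(a,b)=a\cdot\rho(b)+\rho(a)\cdot b-\rho(ab)$ for a linear map $\rho:A\rightarrow E$.

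First I would treat the coboundary-to-isomorphism direction, which yields well-definedness. Given $T,T'\in\mathcal Z^2(A,E)$ with $T-T'=\delta^1\rho$ for a linear map $\rho$, I would define $\psi:\mathfrak A_T\rightarrow\mathfrak A_{T'}$ by $\psi(a,x)=(a,x+\rho(a))$. A direct computation with the twisted products shows that $\psi$ is multiplicative \emph{exactly} because $T-T'=\delta^1\rho$; it fixes $E$ pointwise, its inverse is $(a,x)\mapsto(a,x-\rho(a))$, and it makes diagram (\ref{comdia}) commute, so $\Sigma_T$ is equivalent to $\Sigma_{T'}$. This settles the first sentence of (a). For (b), if $\rho$ is bounded (that is, $T-T'\in\mathcal N^2$) then $\psi$ and $\psi^{-1}$ are bounded, so the equivalence is strong.

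Next I would reverse this to obtain injectivity of both maps. If $\psi:\mathfrak A_T\rightarrow\mathfrak A_{T'}$ is an equivalence, commutativity of (\ref{comdia}) forces $q_{T'}\circ\psi=q_T$, so $\psi(a,x)=(a,\beta(a,x))$; linearity together with $\psi|_E=\id$ gives $\beta(a,x)=x+\rho(a)$, where $\rho(a):=\beta(a,0)$ is linear. Running the previous calculation backwards, multiplicativity of $\psi$ is equivalent to $T-T'=\delta^1\rho$, and since $T,T'$ are bounded this places $T-T'$ in $N^2(A,E)\cap\mathcal B(A,E)=\widetilde N^2(A,E)$; if moreover $\psi$ is continuous then $\rho$ is bounded and $T-T'\in\mathcal N^2(A,E)$. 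For surjectivity I would exploit admissibility: given a singular admissible Banach extension $\Sigma(\mathfrak A;E)$, choose a bounded linear right inverse $Q$ to $q$ and define $T$ by $\iota(T(a,b))=Q(a)Q(b)-Q(ab)$, which lands in $\iota(E)$ since $q(Q(a)Q(b)-Q(ab))=0$. Boundedness of $Q$ and continuity of the product make $T$ a bounded bilinear map, associativity of $\mathfrak A$ forces $T\in\mathcal Z^2(A,E)$, and the identification of the bimodule actions (valid because $\Sigma$ is singular, so $\iota(E)^2=0$) shows that $\Phi(a,x)=Q(a)+\iota(x)$ is an algebra isomorphism $\mathfrak A_T\rightarrow\mathfrak A$ implementing an equivalence. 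As $Q$ and $\iota$ are bounded and $\mathfrak A=Q(A)\oplus\iota(E)$, the map $\Phi$ is a bounded bijection, hence by the Banach isomorphism theorem a strong equivalence, so every relevant class is hit in both the equivalence and the strong-equivalence versions.

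I expect the main obstacle to be not any single computation but the discipline of keeping the two boundedness regimes separate while reusing one underlying calculation: each step --- building $\psi$ from $\rho$, extracting $\rho$ from $\psi$, and building $T$ and $\Phi$ from $Q$ --- must be run once for arbitrary linear maps, giving (a) and $\widetilde H^2$, and once under the continuity hypothesis, giving (b) and $\mathcal H^2$, and I must verify that boundedness transfers correctly at each stage (in particular that continuity of $\psi$ genuinely forces $\rho$ bounded, and that the Banach isomorphism theorem applies to $\Phi$). The single algebraic check --- that the cocycle identity coincides with associativity under the sign conventions of the $\delta^n$ from Section~\ref{Hc} --- is already folded into the definition of $\mathfrak A_T$, so I would simply invoke it.
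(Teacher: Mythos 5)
Your proposal is correct, and it is essentially the canonical argument: the paper itself offers no proof of Proposition \ref{Dalesext}, presenting it as ``a slightly rewritten version of \cite[2.8.12]{Dales}'', and your reconstruction (coboundaries $\delta^1\rho$ give equivalences $(a,x)\mapsto(a,x+\rho(a))$; equivalences, read backwards, give coboundaries, with continuity of $\psi$ forcing boundedness of $\rho$; admissibility plus the singularity identification and the Banach isomorphism theorem give surjectivity onto both kinds of classes) is exactly the standard proof being cited. The only detail you leave implicit is that in the surjectivity step the bilinear map $T$ defined by $\iota(T(a,b))=Q(a)Q(b)-Q(ab)$ is bounded because $\iota^{-1}$ is bounded on the closed subspace $\iota(E)=\ker q$, again by the Banach isomorphism theorem.
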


\subsection{Compact cocycles}\index{cocycle!compact}
We now move on to some new definitions and results.
\begin{dfn}\label{groups}We define the following vector spaces as analogues of those in Subsection \ref{Kamsub}:
\begin{eqnarray*}
Z_K^n(A,E)&:=&\mathcal Z^n(A,E)\cap\mathcal K^n(A,E);\\ 
N_K^n(A,E)&:=&\mathcal N^n(A,E)\cap\mathcal K^n(A,E)(=\mathcal N^n(A,E)\cap Z^n_K(A,E));\\ 
\widetilde N_K^n(A,E)&:=& N^n(A,E)\cap\mathcal K^n(A,E)(=N^n(A,E)\cap Z^n_K(A,E));\\ 
H_K^n(A,E)&:=& Z_K^n(A,E)/N_K^n(A,E);\\ 
\widetilde H_K^n(A,E)&:=& Z_K^n(A,E)/\widetilde N_K^n(A,E).
\end{eqnarray*}
Similarly, we define the following weakly compact analogues of the groups in Subsection \ref{Kamsub}:
\begin{eqnarray*}
Z_w^n(A,E)&:=&\mathcal Z^n(A,E)\cap w\mathcal K^n(A,E);\\ 
N_w^n(A,E)&:=&\mathcal N^n(A,E)\cap w\mathcal K^n(A,E)(=\mathcal N^n(A,E)\cap Z_w^n(A,E));\\ 
\widetilde N_w^n(A,E)&:=& N^n(A,E)\cap w\mathcal K^n(A,E)(= N^n(A,E)\cap Z_w^n(A,E));\\ 
H_w^n(A,E)&:=& Z_w^n(A,E)/N_w^n(A,E);\\ 
\widetilde H_w^n(A,E)&:=& Z_w^n(A,E)/\widetilde N_w^n(A,E).
\end{eqnarray*}
\end{dfn}
Note that $H_K^1(A,E)=\widetilde H_K^1(A,E)$ and is zero if and only if all compact derivations from $A$ to $E$ are inner. If $A$ is a 
commutative Banach algebra and $E$ is a symmetric Banach $A$-bimodule, this is equivalent to there being no non-zero, compact derivations from 
$A$ to $E$. Similarly, $H_w^1(A,E)=\widetilde H_w^1(A,E)$ is zero if and only if all weakly  compact derivations from $A$ to $E$ are inner and, 
if $A$ is a commutative Banach algebra and $E$ a symmetric Banach $A$-bimodule, this is equivalent there being no non-zero 
weakly compact derivations from $A$ to $E$.
\begin{definition}
We say that a Banach extension 
$$\Sigma:0\rightarrow I\stackrel{\iota}{\rightarrow}\mathfrak{A}\stackrel{q}{\rightarrow}A\rightarrow 0 $$ 
\emph{cf-splits} if there is a cf-homomorphism $Q$ such that $q\circ Q=\up{id}_A$. We call $Q:A\rightarrow \mathfrak A$ a \emph{splitting cf-homomorphism}. We say 
$\Sigma$ 
\emph{wcf-splits} if there is a wcf-homomorphism $Q:A\rightarrow \mathfrak A$ such that $q\circ Q=I_a$. In this case we call 
$Q$ a \emph{splitting wcf-homomorphism}. If the extension $\Sigma$ cf-splits [respectively wcf-splits] with a bounded splitting cf-homomorphism  [respectively wcf-homomorphism], we say that $\Sigma$ \emph{cf-splits strongly} [respectively \emph{wcf-splits strongly}].
\end{definition}

 Note that ``(w)cf-splitting strongly'' can be thought of as ``splitting in the category of Banach algebras and 
bounded (w)cf-homomorphisms''. The  statements in the following lemma that refer to splitting and splitting strongly  are well known but do 
not seem to be explicitly stated in the standard textbooks. The statements relating to (w)cf-splitting are new. The proofs 
are trivial and are omitted.
\begin{lem}\label{equivsplit}
Let $A$ be an algebra and let $\Sigma(\mathfrak A;I)$ and $\Sigma(\mathfrak B;I)$ be equivalent extensions of 
$A$. If $\Sigma(\mathfrak A;I)$ splits, then $\Sigma(\mathfrak B;I)$ splits.

Let $A$ be a Banach algebra and let $\Sigma(\mathfrak A;I)$ and $\Sigma(\mathfrak B;I)$ be strongly equivalent Banach extensions of 
$A$. Then the following hold:
\begin{itemize}
\item if $\Sigma(\mathfrak A;I)$ splits strongly, then $\Sigma(\mathfrak B;I)$ splits strongly;
\item if $\Sigma(\mathfrak A;I)$ cf-splits [respectively cf-splits strongly], then $\Sigma(\mathfrak B;I)$ cf-splits [respectively cf-splits strongly];
\item if $\Sigma(\mathfrak A;I)$ wcf-splits [respectively wcf-splits strongly], then $\Sigma(\mathfrak B;I)$ wcf-splits [respectively wcf-splits strongly].
\end{itemize}
\end{lem}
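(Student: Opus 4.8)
The plan is to transport a splitting of $\Sigma(\mathfrak A;I)$ across the isomorphism $\psi:\mathfrak A\rightarrow\mathfrak B$ that the definition of (strong) equivalence provides. Recall that commutativity of the right-hand square of diagram (\ref{comdia}) gives $q_{\mathfrak B}\circ\psi=q_{\mathfrak A}$, and that $\psi$ is an algebra isomorphism which, in the strongly equivalent case, is moreover continuous (with continuous inverse, by the Banach isomorphism theorem). In every clause the recipe is the same: compose the given splitting map with $\psi$.

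First I would dispose of the purely algebraic statement. If $\theta:A\rightarrow\mathfrak A$ is a splitting homomorphism for $\Sigma(\mathfrak A;I)$, set $\theta':=\psi\circ\theta$. Then $\theta'$ is a composite of homomorphisms, hence a homomorphism, and $q_{\mathfrak B}\circ\theta'=q_{\mathfrak B}\circ\psi\circ\theta=q_{\mathfrak A}\circ\theta=\id_A$, so $\theta'$ splits $\Sigma(\mathfrak B;I)$. The same formula settles strong splitting at once: if $\theta$ and $\psi$ are both continuous, then $\theta'=\psi\circ\theta$ is continuous.

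For the (w)cf statements I would again set $Q':=\psi\circ Q$, where $Q:A\rightarrow\mathfrak A$ is a splitting (w)cf-homomorphism. The quotient condition $q_{\mathfrak B}\circ Q'=\id_A$ follows exactly as above. For the failure of multiplicativity, the composition identity established in the proof of Theorem \ref{nhcomp} gives $S_{Q'}(a,b)=\psi(S_Q(a,b))-S_\psi(Q(a),Q(b))$; since $\psi$ is multiplicative we have $S_\psi=0$, and hence $S_{Q'}=\psi\circ S_Q$. Because $\psi$ is bounded, Lemma \ref{comcom} shows that $\psi\circ S_Q$ is compact whenever $S_Q$ is compact and weakly compact whenever $S_Q$ is weakly compact; thus $Q'$ is a cf-homomorphism (respectively a wcf-homomorphism). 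In the ``splits strongly'' variants $Q$ is in addition bounded, so $Q'=\psi\circ Q$ is bounded, yielding the strong conclusions.

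The only points needing care are the hypotheses of Lemma \ref{comcom}, namely a bounded outer map and a bounded bilinear inner map. Both hold automatically: $\psi$ is bounded in the strongly equivalent setting, and any (weakly) compact bilinear map carries the product of the unit balls into a bounded set, so $S_Q$ is bounded even though $Q$ itself need not be. Consequently no boundedness hypothesis on $Q$ is used in the non-strong cases, and I expect this to be the only (very minor) subtlety; everything else is the routine observation that post-composition with a bounded algebra isomorphism preserves each of the relevant splitting properties.
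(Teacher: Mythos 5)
Your proof is correct: the paper omits the proof of this lemma as ``trivial'', and your argument---transporting each splitting map across the equivalence isomorphism via $Q':=\psi\circ Q$, computing $S_{Q'}=\psi\circ S_Q$ from multiplicativity of $\psi$, and invoking Lemma \ref{comcom} (with continuity of $\psi$ guaranteed by strong equivalence)---is exactly the intended routine argument. Your side remark that a (weakly) compact bilinear map is automatically bounded, so that Lemma \ref{comcom} applies even though the splitting map $Q$ need not be bounded in the non-strong cases, correctly disposes of the only subtlety.
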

For the remainder of this section we will refer only to the norm topology case. In all cases the weak topology version of any result holds and the proof is basically identical.

The following is an analogue of \ref{Dalesext} in this new setting.
\begin{thm}\label{Kext}
Let $A$ be a Banach algebra, $E$ a Banach $A$-bimodule, and
$T, T'\in Z_K^2(A,E)$. Then:
\begin{itemize} 
\item[(a)] $\Sigma_T$ cf-splits strongly. 
\item[(b)]if $T-T'\in \widetilde N_K^2(A,E)$, $\Sigma_T$ is equivalent to $\Sigma_{T'}$. Moreover,
$$T+\widetilde N_K^2(A,E)\mapsto[\Sigma_T]$$ 
is a bijection from $\widetilde H_K^2(A,E)$ to the family of equivalence classes $C$, with respect to equivalence, of singular, admissible Banach extensions of $A$ by $E$ such that $C$ contains an extension that cf-splits (or equivalently contains an extension that cf-splits strongly); 
\item[(c)]if $T-T'\in N_K^2(A,E)$, $\Sigma_T$ is strongly  equivalent to $\Sigma_{T'}$. Moreover,
$$T+ N_K^2(A,E)\mapsto[\Sigma_T]_s$$
is a bijection from $H_K^2(A,E)$ to the family of equivalence classes, with respect to strong equivalence, of singular, admissible
Banach extensions of $A$ by $E$ that cf-split strongly.
\end{itemize}
\end{thm}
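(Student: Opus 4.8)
The plan is to mirror the proof of Proposition \ref{Dalesext}, upgrading each step to the compact setting by means of Lemma \ref{comcom} and the fact, from Corollary \ref{comvec}, that $\mathcal K^2(A,E)$ is a linear subspace of $\mathcal B^2(A,E)$. For part (a) I would take the canonical splitting $Q:A\rightarrow\mathfrak A_T$, $Q(a)=(a,0)$, which is bounded and satisfies $q_T\circ Q=\id_A$. Computing in $\mathfrak A_T$ gives $S_Q(a,b)=Q(a)Q(b)-Q(ab)=(0,T(a,b))=\iota_T(T(a,b))$, so $S_Q=\iota_T\circ T$. As $T\in Z_K^2(A,E)\subseteq\mathcal K^2(A,E)$ and $\iota_T$ is bounded and linear, Lemma \ref{comcom} shows $S_Q$ is compact; hence $Q$ is a bounded cf-homomorphism and $\Sigma_T$ cf-splits strongly.

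For the well-definedness and injectivity claims in (b) and (c) I would reduce to Proposition \ref{Dalesext}. Since compact bilinear maps are bounded, $\widetilde N_K^2(A,E)\subseteq\widetilde N^2(A,E)$ and $N_K^2(A,E)\subseteq\mathcal N^2(A,E)$; thus $T-T'\in\widetilde N_K^2$ [respectively $N_K^2$] gives $\Sigma_T$ equivalent [respectively strongly equivalent] to $\Sigma_{T'}$ by Proposition \ref{Dalesext}, which is well-definedness. Conversely, if $T,T'\in Z_K^2$ and $[\Sigma_T]=[\Sigma_{T'}]$ [respectively $[\Sigma_T]_s=[\Sigma_{T'}]_s$], then Proposition \ref{Dalesext} yields $T-T'\in\widetilde N^2$ [respectively $\mathcal N^2$]; because $\mathcal K^2(A,E)$ is a subspace containing both $T$ and $T'$ we also have $T-T'\in\mathcal K^2$, and intersecting gives $T-T'\in\widetilde N_K^2$ [respectively $N_K^2$], which is injectivity.

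The substantive step is surjectivity. Given a singular admissible Banach extension $\Sigma:0\rightarrow E\rightarrow\mathfrak A\rightarrow A\rightarrow0$ in the target family that cf-splits [respectively cf-splits strongly], I would choose a cf-homomorphism [respectively bounded cf-homomorphism] $Q$ with $q\circ Q=\id_A$ and set $T:=\iota^{-1}\circ S_Q$; this makes sense because $S_Q$ takes values in $\ker q=\iota(E)$ and $\iota^{-1}$ is bounded by the open mapping theorem. A routine calculation shows $T$ is an algebraic $2$-cocycle, and since $S_Q$ is compact, Lemma \ref{comcom} makes $T$ compact. As compact bilinear maps are bounded, $T\in\mathcal Z^2(A,E)$ even when $Q$ is unbounded, so $T\in Z_K^2(A,E)$. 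The linear bijection $\psi(a,x)=Q(a)+\iota(x)$ is then an algebra isomorphism realising $\Sigma$ as equivalent to $\Sigma_T$, and when $Q$ is bounded it has bounded inverse $c\mapsto(q(c),\iota^{-1}(c-Q(q(c))))$, giving strong equivalence. Thus $[\Sigma]=[\Sigma_T]$ [respectively $[\Sigma]_s=[\Sigma_T]_s$] lies in the image.

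It remains to identify the target families. By part (a) each $\Sigma_T$ with $T\in Z_K^2$ cf-splits strongly, so the image of the map in (b) sits inside the classes containing a strongly cf-splitting representative, which sit inside the classes containing a cf-splitting representative; the surjectivity argument shows the latter is contained in the image, so all three families coincide, proving both the bijection and the parenthetical ``or equivalently'' in (b). For (c), Lemma \ref{equivsplit} ensures that cf-splitting strongly is invariant under strong equivalence, so the target family is well defined and the construction above shows the map is onto it. I expect the main obstacle to be precisely this surjectivity step: extracting a genuinely compact \emph{cocycle} from a possibly unbounded cf-splitting, where the key points are that $\iota^{-1}$ is bounded and that Lemma \ref{comcom} together with ``compact implies bounded'' forces the reconstructed cocycle into $Z_K^2$.
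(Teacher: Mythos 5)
Your proposal is correct and follows essentially the same route as the paper's own proof: the canonical splitting $Q(a)=(a,0)$ with $S_Q=\iota_T\circ T$ for part (a), reduction of well-definedness and injectivity to Proposition \ref{Dalesext} via the subspace structure of $\mathcal K^2(A,E)$, and surjectivity by reconstructing the cocycle $T=\iota^{-1}\circ S_Q$ from a splitting cf-homomorphism and exhibiting the explicit equivalence $\psi$. Your added care about $\iota^{-1}$ being bounded, compactness implying boundedness of the reconstructed cocycle, and the identification of the three target families in (b) only makes explicit what the paper leaves implicit.
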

\begin{proof}
First, we prove part (a). Let $T\in Z_K^2(A,E)$ be arbitrary and define a bounded linear map $Q:A\rightarrow \mathfrak A_T$ by $Q(a)=(a,0)$, $(a\in A)$. Then, for $a,b\in A$,
\begin{eqnarray*}
S_Q(a,b):&=&Q(a)Q(b)-Q(ab)=(a,0)(b,0)-(ab,0)\\
&=&(ab,T(a,b))-(ab,0)=(0,T(a,b)),
\end{eqnarray*}
and so $S_Q$ is compact. Clearly, $Q\circ q=\up{id}_A$ so $\Sigma_T$ cf-splits strongly  with bounded splitting cf-homomorphism $Q$ and so part (a) holds.

Now we prove parts (b) and (c).
Let $T, T'\in Z_K^2(A,E)$.
Suppose that $T-T'\in \widetilde N_K^2(A,E)$. Then $T-T'\in \widetilde N^2(A,E)$, and so $\Sigma_T$ is equivalent to $\Sigma_{T'}$ 
by part (a) of Proposition \ref{Dalesext}. 

Suppose further that $T-T'\in N_K^2(A,E)$. Then $T-T'\in \mathcal N^2(A,E)$, and so $\Sigma_T$ is strongly equivalent to 
$\Sigma_{T'}$ by part (a) of Proposition \ref{Dalesext}. 

Now suppose instead that $[\Sigma_T]=[\Sigma_{T'}]$. Then $T-T'\in \widetilde N^2(A,E)$ by part (a) of Proposition \ref{Dalesext}. Also, $T-T'\in Z_K^2(A,E)$ by assumption so $T-T'\in \widetilde N_K^2(A,E)$ and $T+\widetilde N_K^2(A,E)\mapsto[\Sigma_T]$ is injective.

Suppose further that $[\Sigma_T]_s=[\Sigma_{T'}]_s$. Then $T-T'\in \mathcal N^2(A,E)$ by part (a) of Proposition \ref{Dalesext}. Also, $T-T'\in Z_K^2(A,E)$ by assumption so $T-T'\in N_K^2(A,E)$ and $T+N_K^2(A,E)\mapsto[\Sigma_T]_s$ is injective.

That the two maps are into the collection of equivalence classes (with respect to the relevant relation) of extensions that cf-split follows from part (a), proven above.

It only remains to show that the maps are surjective, i.e. that, for each singular Banach extension $\Sigma$ of $A$ by $E$ that cf-splits, there 
exists $T\in Z_K^2(A,E)$ with $\Sigma_T$ equivalent to $\Sigma$ and that if $\Sigma$ cf-splits strongly we may take $\Sigma_T$ to be strongly equivalent to $\Sigma$. Let 
$$\Sigma=\Sigma(\mathfrak{A};E):0\rightarrow E\stackrel{\iota}{\rightarrow}\mathfrak{A}\stackrel{q}{\rightarrow}A\rightarrow 0$$
be a singular Banach extension of $A$ by $E$ with splitting cf-homomorphism $Q$. By the definition of a cf-homomorphism, we have that $S_Q\in\mathcal K^2(A,\mathfrak A)$. Now, since $q$ is a homomorphism,
\begin{eqnarray*}
q\circ S_Q(a,b)&=&q(Q(a)Q(b)-Q(ab))\\
&=& q\circ Q(a)q\circ Q(b)-q\circ Q(ab)\\
&=&ab-ab=0, \quad(a,b\in A).
\end{eqnarray*}
 Hence, $S_Q(A^{(2)})\subseteq \up{ker}(q)=E$ and so we can define $T\in\mathcal K^2(A,E)$ by $T(a,b)=S_Q(a,b)$, $(a,b\in A)$. Furthermore, a direct calculation yields,
\[
\delta^2(T)(a,b,c)=0
\]
Hence, $T\in Z_K^2(A,E)$. We claim that the Banach extension $\Sigma_T$ is  equivalent to $\Sigma$. For $a\in\mathfrak A$, 
$a-Q(q(a))\in E$ so we may define a map
\func{\psi}{\mathfrak A}{\mathfrak A_T}{a}{(q(a),a-Q(q(a)))} 
It is clear that $\psi$ is linear. Furthermore, if $Q$ is bounded (which we may assume if $\Sigma$ cf-splits strongly) then $\psi$ is also bounded. Also, if we define a map $\phi:\mathfrak A_T\rightarrow\mathfrak A$ by 
$\phi((b,e))=Q(b)+e$ it is easily checked that $\phi$ and $\psi$ are mutually inverse. Further, $q_T\circ\psi(a)=q(a)$, 
$(a\in \mathfrak A)$ so $\psi$ makes the diagram (\ref{comdia}) commute. It remains only to show that $\psi$ is an algebra homomorphism. 
Let $a,b\in\mathfrak A$; then a direct calculation yields
\[
\psi(a)\psi(b)=\big(q(ab),ab-Q(q(ab))+(a-Q(q(a)))(b-Q(q(b)))\big),
\]
but $a-Q(q(a)),b-Q(q(b))\in E$ so $(a-Q(q(a)))(b-Q(q(b)))=0$ and so
\[
\psi(a)\psi(b)=(q(ab),ab-Q(q(ab)))=\psi(ab).
\]
Thus the result holds.
\end{proof}
Note that part (b) of the above theorem implies that, if a singular, admissible Banach extension of $A$ by $E$ cf-splits, then it is equivalent to a singular, admissible Banach extension of $A$ by $E$ that cf-splits strongly.

This gives us the following corollaries.
\begin{corollary}\label{Kextcor}
Let $A$ be a Banach algebra, and let $E$ be a Banach $A$-bimodule.
\begin{enumerate}
\item The following are equivalent:
\begin{itemize}
\item[(a)] $\widetilde H_K^2(A,E)=\{0\}$;
\item[(b)] each singular Banach extension of $A$ by $E$, which cf-splits, does split.
\end{itemize}
\item The following are equivalent:
\begin{itemize}
\item[(a)] $H_K^2(A,E)=\{0\}$;
\item[(b)] each singular Banach extension of $A$ by $E$, which cf-splits strongly, does split strongly.
\end{itemize}
\end{enumerate}
\end{corollary}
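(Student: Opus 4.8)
The plan is to read both equivalences off the two bijections furnished by Theorem~\ref{Kext}, together with the standard fact that a singular extension \emph{splits} [respectively \emph{splits strongly}] exactly when it is equivalent [respectively strongly equivalent] to the split extension $\Sigma_0$ arising from the zero cocycle $T=0$. The only non-formal ingredient is the forward direction of this fact, which I would obtain for free from the surjectivity construction in the proof of Theorem~\ref{Kext}: if $\theta$ is a splitting homomorphism for $\Sigma$, then $S_\theta=0$ is compact, so $\theta$ is a splitting cf-homomorphism, and feeding $Q=\theta$ into that construction returns the cocycle $S_\theta=0$ together with an equivalence (strong, if $\theta$ is bounded) of $\Sigma$ with $\Sigma_0$.

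For part~(1) I would argue as follows. To prove (b)$\Rightarrow$(a), take an arbitrary $T\in Z_K^2(A,E)$. By Theorem~\ref{Kext}(a) the extension $\Sigma_T$ cf-splits strongly, so in particular it cf-splits, and (b) then forces it to split; hence $\Sigma_T$ is equivalent to $\Sigma_0$ and Proposition~\ref{Dalesext}(a) yields $T=T-0\in\widetilde N^2(A,E)$. Since also $T\in\mathcal K^2(A,E)$, we get $T\in N^2(A,E)\cap\mathcal K^2(A,E)=\widetilde N_K^2(A,E)$, so $T+\widetilde N_K^2(A,E)=0$; as $T$ was arbitrary, $\widetilde H_K^2(A,E)=\{0\}$. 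For (a)$\Rightarrow$(b), let $\Sigma$ be any singular Banach extension of $A$ by $E$ that cf-splits. The surjectivity part of the proof of Theorem~\ref{Kext}(b) produces $T\in Z_K^2(A,E)$ with $\Sigma$ equivalent to $\Sigma_T$ (no admissibility hypothesis on $\Sigma$ is required, since admissibility of $\Sigma_T$ is automatic). By (a) we have $T\in\widetilde N_K^2(A,E)\subseteq\widetilde N^2(A,E)$, so Proposition~\ref{Dalesext}(a) makes $\Sigma_T$ equivalent to $\Sigma_0$; since $\Sigma_0$ splits, Lemma~\ref{equivsplit} shows that $\Sigma_T$, and therefore $\Sigma$, splits.

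Part~(2) is the same argument with the strong versions substituted throughout: replace Theorem~\ref{Kext}(b) by~(c), Proposition~\ref{Dalesext}(a) by~(b), the words ``splits'' by ``splits strongly'', the equivalence class $[\Sigma_T]$ by $[\Sigma_T]_s$, and the coboundary spaces $\widetilde N^2,\widetilde N_K^2$ by $\mathcal N^2,N_K^2$, with Lemma~\ref{equivsplit} again ensuring that strong splitting is preserved under strong equivalence. I expect the main obstacle to be purely bookkeeping: keeping straight the five coboundary spaces $N^2,\mathcal N^2,\widetilde N^2,\widetilde N_K^2,N_K^2$ and correctly matching ``splits'' to the $\widetilde N$-data and ``splits strongly'' to the $\mathcal N$-data. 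Once the correspondence ``(strong) splitting $\Leftrightarrow$ (strong) equivalence to $\Sigma_0$'' is fixed, the rest is a formal consequence of the bijections in Theorem~\ref{Kext}.
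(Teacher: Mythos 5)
Your proof is correct, and its skeleton (Theorem \ref{Kext}, Proposition \ref{Dalesext}, Lemma \ref{equivsplit}, and the fact that $\Sigma_0$ splits strongly) matches the paper's; in particular your (a)$\Rightarrow$(b) directions are essentially identical to the paper's. Where you genuinely diverge is in (b)$\Rightarrow$(a). The paper argues by direct computation: given a splitting homomorphism $\theta$ for $\Sigma_T$, it writes $\theta(a)=(a,S(a))$ for a linear map $S:A\rightarrow E$, expands $\theta(ab)=\theta(a)\theta(b)$, and reads off $T=\delta^1(-S)$, so that $T\in N^2(A,E)$ in part 1, and $T\in\mathcal N^2(A,E)$ in part 2 where $\theta$, hence $S$, is continuous. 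You avoid this computation entirely: you note that a splitting homomorphism $\theta$ is a splitting cf-homomorphism with $S_\theta=0$, re-run the surjectivity construction from the proof of Theorem \ref{Kext} with $Q=\theta$ to conclude that $\Sigma_T$ is equivalent (strongly, when $\theta$ is bounded) to $\Sigma_0$, and then invoke the injectivity half of the correspondence in Proposition \ref{Dalesext} to get $T\in\widetilde N^2(A,E)$ [respectively $T\in\mathcal N^2(A,E)$]. Both routes are sound; yours is more economical and purely formal, deriving everything from the stated bijections, while the paper's is self-contained and exhibits the coboundary witness $-S$ explicitly rather than leaving it packaged inside the injectivity statement of Proposition \ref{Dalesext}. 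Your side remarks --- that plain cf-splitting does not give admissibility of $\Sigma$ but this is harmless because the surjectivity construction and Lemma \ref{equivsplit} only need plain equivalence, and that ``splits'' must be paired with the $\widetilde N$-data and ``splits strongly'' with the $\mathcal N$-data --- are exactly the correct bookkeeping and are handled consistently.
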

\begin{proof}
\begin{enumerate}
\item
To show that (a) implies (b), let  $\widetilde H_K^2(A,E)=\{0\}$ and let 
\[
\Sigma:0\rightarrow I\stackrel{\iota}{\rightarrow}\mathfrak{A}\stackrel{q}{\rightarrow}A\rightarrow 0 
\]
be a Banach extension of $A$ by $E$ which cf-splits. By Theorem \ref{Kext}, $\Sigma$ is equivalent to the Banach extension $\Sigma_0$ (that is the Banach extension $\Sigma_T$ where $T$ is the zero map):
\[
\Sigma_0:0\rightarrow I\stackrel{\iota_0}{\rightarrow}\mathfrak{A}\stackrel{q_0}{\rightarrow}A\rightarrow 0 
\]
where, for $x\in E$ and $a\in A$,   $\iota_0(x)=(0,x)$ and $q_0((a,x))=a$. The extension 
$\Sigma_0$ splits strongly with the continuous splitting homomorphism $\theta:A\rightarrow\mathfrak A_0$, 
$\theta(a)=(a,0)$, $(a\in A)$. By Lemma \ref{equivsplit}, $\Sigma$ splits. 

To show that (b) implies (a), we assume that each singular Banach 
extension of $A$ by $E$, which cf-splits, 
splits and let $T\in Z_K^2(A,E)$. Then, by Theorem \ref{Kext},  $\Sigma_T$ splits; let $\theta$ be a splitting homomorphism for $\Sigma_T$. Since, 
$q_T\circ\theta=\up{id}_A$ it follows that there exists $S\in \mathcal L(A,E)$ with $\theta(a)=(a,S(a))$ $(a\in A)$. Hence,
\begin{eqnarray*}
(ab,S(ab))&=&\theta(ab)=\theta(a)\theta(b)\\
&=&(a,S(a))(b,S(b))=(ab,a\cdot S(b)+S(a)\cdot b+T(a,b)),
\end{eqnarray*}
 so
\[
S(ab)=a\cdot S(b)+S(a)\cdot b+T(a,b),
\]
and 
\[
T(a,b)=a\cdot(-S(b))+(-S(a))\cdot b -(-S(ab))=\delta^1(-S)(a,b).
\]
Thus $T=\delta^1(-S)\in N^2(A,E)$ and so $\widetilde H_K^2(A,E)=\{0\}$.

\item To show (a) implies (b) let $H_K^2(A,E)=\{0\}$ and $\Sigma$ be a singular Banach extension of $A$ by $E$, which cf-splits strongly. By Theorem 
\ref{Kext}, $\Sigma$ is strongly equivalent to $\Sigma_0$ and so, by Lemma \ref{equivsplit}, $\Sigma$ splits strongly. 

To show that (b) implies (a), assume that each singular Banach extension of $A$ by $E$, which 
cf-splits strongly, splits strongly and let $T\in Z_K^2(A,E)$. Then $\Sigma_T$ splits strongly; let $\theta$ be a continuous splitting 
homomorphism for $\Sigma_T$. Since, $q_T\circ\theta=\up{id}_A$ it follows that there exists $S\in \mathcal B(A,E)$ with 
$\theta(a)=(a,S(a))$ $(a\in A)$. As in the proof of the first part of this result, $T=\delta^1(-S)$ and so $T\in\mathcal N^2(A,E)$. 
Hence $H_K^2(A,E)=\{0\}$.
\end{enumerate}
\end{proof}
The following result gives us a new way of showing that bounded cf-homomorphisms need not be homomorphisms or compact linear maps (which we showed directly in Example \ref{nhex}). 
\begin{cor}\label{nhcor}
Let $A$ be an infinite-dimensional Banach algebra and $E$ a Banach $A$-bimodule such that $H^2_K(A,E)\ne\{0\}$, then there exists a Banach algebra 
$\mathfrak A$ with underlying Banach space isomorphic to $A\oplus_1 E$ and a bounded cf-homomorphism $Q:A\rightarrow\mathfrak A$ which is 
neither a homomorphism nor a compact linear map.
\end{cor}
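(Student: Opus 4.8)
The plan is to realise the desired map as the canonical splitting cf-homomorphism of an extension $\Sigma_T$ built from a compact cocycle that is not a compact coboundary, exactly along the lines of the proof of Theorem \ref{Kext}(a). Once the right $T$ and the right target algebra $\mathfrak A$ are fixed, both required failures---non-multiplicativity and non-compactness---will fall out immediately, the first from a property of $T$ and the second from a retraction identity together with the dimension hypothesis.

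First I would invoke the hypothesis $H^2_K(A,E)\ne\{0\}$ to choose a cocycle $T\in Z_K^2(A,E)$ with $T\notin N_K^2(A,E)$. Since $N_K^2(A,E)$ is a vector subspace it contains the zero map, so this in particular forces $T\ne 0$ as a bilinear map; this is the only feature of $T$ that I will actually use. I then set $\mathfrak A=\mathfrak A_T=A\oplus_T E$. By the discussion preceding Proposition \ref{Dalesext}, $\mathfrak A_T$ carries a norm equivalent to the one given by $\norm{(a,x)}_1=\norm a+\norm x$, so its underlying Banach space is isomorphic to $A\oplus_1 E$, as required by the statement.

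Next I define $Q:A\rightarrow\mathfrak A_T$ by $Q(a)=(a,0)$. By Theorem \ref{Kext}(a) this is a bounded cf-homomorphism, and the computation carried out there gives $S_Q(a,b)=(0,T(a,b))$. Because $T\ne 0$, the map $S_Q$ is not identically zero, so $Q$ is not multiplicative and hence not a homomorphism. It then remains to check that $Q$ is not a compact linear map. For this I would note that the quotient map $q_T:\mathfrak A_T\rightarrow A$, $q_T((a,x))=a$, is bounded and satisfies $q_T\circ Q=\up{id}_A$. If $Q$ were compact, then $\up{id}_A=q_T\circ Q$ would be compact (a bounded map composed with a compact one), forcing $A$ to be finite-dimensional and contradicting the hypothesis that $A$ is infinite-dimensional.

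I do not anticipate any serious obstacle, since every ingredient is already assembled in Theorem \ref{Kext} and in the construction of $\mathfrak A_T$. The only two points deserving a moment's care are: (i) extracting $T\ne 0$ from the cohomological hypothesis---this is precisely where $H^2_K(A,E)\ne\{0\}$, rather than merely the existence of some compact extension, enters; and (ii) observing that the failure of compactness of $Q$ is forced purely by the retraction identity $q_T\circ Q=\up{id}_A$ together with $\dim A=\infty$, and is entirely independent of the particular choice of $T$.
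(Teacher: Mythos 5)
Your proof is correct, and while it rests on the same underlying construction ($\mathfrak A_T$ together with the canonical section $Q(a)=(a,0)$ from Theorem \ref{Kext}(a)), it takes a genuinely more direct route than the paper does. The paper argues indirectly: it invokes Corollary \ref{Kextcor}(2) to produce an \emph{abstract} singular Banach extension $\Sigma$ of $A$ by $E$ that cf-splits strongly but does not split strongly, deduces that its bounded splitting cf-homomorphism $Q$ is not a homomorphism from the failure of strong splitting (if $Q$ were multiplicative it would be a continuous splitting homomorphism), and then needs a second appeal to Theorem \ref{Kext} (strong equivalence of $\Sigma$ with some $\Sigma_T$) just to identify the underlying Banach space of $\mathfrak A$ with $A\oplus_1 E$. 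You instead pick a concrete cocycle $T\in Z_K^2(A,E)\setminus N_K^2(A,E)$, form $\Sigma_T$ directly (so the Banach-space identification is immediate from the construction), and read off non-multiplicativity from the explicit formula $S_Q(a,b)=(0,T(a,b))\ne 0$; both proofs then use the identical retraction argument ($q_T\circ Q=\up{id}_A$ together with $\dim A=\infty$) to rule out compactness. Your route is shorter and bypasses Corollary \ref{Kextcor} entirely; it also proves slightly more than you claim in your remark (i): since the only property of $T$ you use is $T\ne 0$, your argument establishes the conclusion under the weaker hypothesis $Z_K^2(A,E)\ne\{0\}$, so the full force of $H_K^2(A,E)\ne\{0\}$ never actually enters. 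What the paper's detour buys is a stronger property of the map it produces: its $Q$ sits in an extension admitting \emph{no} bounded splitting homomorphism whatsoever, not merely a $Q$ that fails to be multiplicative itself; note, though, that your choice $T\notin N_K^2(A,E)$ forces $T\notin\mathcal N^2(A,E)$ (because $T$ is compact), so $\Sigma_T$ in fact has that same property, even if you never use it.
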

\begin{proof}
 By Corollary \ref{Kextcor} there exists a Banach extension,
\[
\Sigma:0\rightarrow E\stackrel{\iota}{\rightarrow}\mathfrak{A}\stackrel{q}{\rightarrow}A\rightarrow 0,
\]
of $A$ by $E$ which does not split strongly but such that there is a bounded cf-homomorphism, 
$Q:A\rightarrow \mathfrak A$ with $q\circ Q=\up{id}_A$. Since $\Sigma$ does not split strongly, $Q$ is not a homomorphism, and since 
$q\circ Q=\up{id}_A$ is not a compact linear map, neither is $Q$. By Theorem \ref{Kext}, $\Sigma$ is strongly equivalent to $\Sigma_T$ 
for some $T\in N^2_K(A,E)$ and so $\mathfrak A_T$ is isomorphic as a Banach space to  $A\oplus_1 E$.
\end{proof}
Below is an example of a choice of $A$ and $E(=A^*)$ satisfying the hypotheses of Corollary \ref{nhcor}. The author would like to thank Yemon Choi for sending him some notes, which help with this construction.
\begin{example}
Let $A$ be $\ell^2$ equipped with the pointwise product. A direct calculation gives that the map 
$\gamma^1: \mathcal B(A, A^*)\rightarrow  \mathcal B^2(A, A^*)$ is injective.
For $N\in\N$ we set $G_N:A\rightarrow A^*$ to be the bounded linear map given by 
\[G_N(e_k)(e_j)=g_{N,j,k}=
\left\{
\begin{array}{ll}
\frac{1}{3\sqrt{2N+1}}&\textrm{if }\abs j,\abs k\le N\\
0&\textrm{otherwise}
\end{array}
\right.\]
and set $F_N:=\gamma^1(G_N)$. We have that $F_N$ is finite rank and bounded (an thus that it is a compact bilinear map).

A direct calculation gives that, for each $N\in\N$, $\norm{F_N}\le1$ and $\norm{G_N}\rightarrow\infty$ as $N\rightarrow \infty$. It follows from an application of the Banach isomorphism theorem that $N_K^2(A;A^*)$ cannot be complete. In particular $N_K^2(A;A^*)\ne Z_K^2(A;A^*)$ i.e. $H_K^2(A,A^*)\ne0$.
\end{example}
\section{Open questions}
We finish by listing some questions relating to the material in this paper.
\begin{question}
For well known examples of Banach algebras, $A$, what are the automorphisms of $A$ in the category of Banach algebras with cf-homomorphisms; what are the automorphisms of $A$ in the category of Banach algebras with wcf-homomorphisms?
\end{question}
\begin{question}
Does there exist a wcf-homomorphism which is neither a weakly compact linear map nor a cf-homomorphism? 
\end{question}
\begin{question}
What can we say about (weakly) compact failure of other algebraic identities: for example, commutativity?
\end{question}
\begin{question}
What do the groups $H_K^n(A,E)$ (and the others introduced in Definition \ref{groups}) tell us about $A$ and $E$ when $n>2$?
\end{question}

\bibliographystyle{plain} 

\begin{thebibliography}{10}

\bibitem{BDL}
W.~G. Bade, H.~G. Dales, and Z.~A. Lykova.
\newblock Algebraic and strong splittings of extensions of {B}anach algebras.
\newblock {\em Mem. Amer. Math. Soc.}, 137(656):viii+113, 1999.

\bibitem{Bourbaki}
N.~Bourbaki.
\newblock {\em Topological Vector Spaces}, volume~5 of {\em Elements of
  Mathematics}.
\newblock Springer-Verlag, Paris, 1981.

\bibitem{Sanchez}
F.~Cabello~S\'{a}nchez.
\newblock Pseudo-characters and almost multiplicative functionals.
\newblock {\em Journal of Mathematical Analysis and Applications},
  248:275--289, 2000.

\bibitem{Dales}
H.~G. Dales.
\newblock {\em Banach algebras and automatic continuity}, volume~24 of {\em
  London Mathematical Society Monographs. New Series}.
\newblock The Clarendon Press Oxford University Press, New York, 2000.
\newblock Oxford Science Publications.

\bibitem{MeThesis}
M.~J. Heath.
\newblock {\em Bounded derivations from {B}anach algebras}.
\newblock PhD thesis, University of Nottingham, 2007.
\newblock submitted.

\bibitem{Howey}
R.J.A. Howey.
\newblock Approximately multiplicative functionals on algebras of smooth
  functions.
\newblock {\em J. London Math. Soc.}, 2003.

\bibitem{Jameson}
G.~J.~O. Jameson.
\newblock {\em Topology and Normed Spaces}.
\newblock Chapman and Hall, London, 1974.

\bibitem{JaroszAM}
K.~Jarosz.
\newblock Almost multiplicative functionals.
\newblock {\em Studia Math}, 124:37--58, 1997.

\bibitem{Jarosz}
K.~Jarosz.
\newblock When is a linear functional multiplicative?
\newblock In {\em Proc. of the 3rd conference on Function spaces}, volume 232
  of {\em Cont. Math.}, pages 201--210. AMS, 1999.

\bibitem{JohnsonAMNMf}
B.~E. Johnson.
\newblock Approximately multiplicative functionals.
\newblock {\em J. London Math. Soc. (2)}, 34(3):489--510, 1986.

\bibitem{JohnsonAMNM}
B.~E. Johnson.
\newblock Approximately multiplicative maps between {B}anach algebras.
\newblock {\em J. London Math. Soc. (2)}, 37(2):294--316, 1988.

\bibitem{Kam}
H.~Kamowitz.
\newblock Cohomology groups of commutative {B}anach algebras.
\newblock {\em Trans. Amer. Math. Soc.}, 102:352--372, 1962.

\bibitem{Krikorian}
N.~Krikorian.
\newblock Compact multilinear transformations.
\newblock {\em Proc. Amer. Math. Soc.}, 33:373--376, 1972.

\bibitem{LRRW}
R.~J. Loy, C.~J. Read, V.~Runde, and G.~A. Willis.
\newblock Amenable and weakly amenable {B}anach algebras with compact
  multiplication.
\newblock {\em J. Funct. Anal.}, 171(1):78--114, 2000.

\bibitem{Sidney-AM}
S.~J. Sidney.
\newblock Are all uniform algebras {AMNM}.
\newblock {\em Bulletin of the London Mathematical Society}, 1997.

\end{thebibliography}
\def\cprime{$'$}

\noindent M. J. Heath\\
Departamento de Matem\'atica \\
Instituto Superior T\'ecnico \\
Av. Rovisco Pais \\
1049-001 Lisboa \\
Portugal \\
mheath@math.ist.utl.pt

\end{document}